\newtheorem{thm}{Theorem}
\newtheorem{lem}{Lemma}
\newtheorem{prop}{Proposition}
\newtheorem{rem}{Remark}
\crefname{thm}{Thm.}{}
\crefname{prop}{Prop.}{}
\crefname{lem}{Lem.}{}
\crefname{rem}{Rem.}{}
\crefname{cor}{Cor.}{}
\crefname{prob}{Problem}{}
\crefname{figure}{Fig.}{}
\crefname{equation}{Eq.}{}
\crefname{exa}{Exa.}{}
\crefname{defi}{Def.}{}
\newcommand{\F}{\mathbb{F}}       
\newcommand\norm[1]{\Vert#1\Vert}
\newcommand{\abs}[1]{\left\lvert\mspace{1mu}#1\mspace{1mu}\right\rvert}
\newcommand{\Q}{\mathbb Q}
\newcommand{\C}{\mathbb C}
\newcommand{\Z}{\mathbb Z}
\newcommand\cL{\mathcal L}
\newcommand\cM{\mathcal M}
\def\P{\mathbb P}  
\def\q{\mathbb q}
\DeclareMathOperator\wgcd{wgcd}
\DeclareMathOperator\diam{diam}
\DeclareMathOperator\PGL{PGL}
\begin{document}
\title{Geometric Learning and Finsler Metrics in Weighted Projective Spaces}

\author{Tanush Shaska}
\address{Department of Computer Science and Engineering, \\ Oakland University, Rochester, MI, 48309}
\email{tanush@umich.edu}

\begin{abstract}
We introduce a hierarchical clustering framework for weighted projective spaces
$\P_{\q}$ built on Finsler geometry. From an optimization-based Finsler norm that
quotients out the weighted scaling action, we construct a scaling-invariant
distance $d_F([z], [w])$ and a rational analogue $d_{F,\Q}([z], [w])$ for points
of $\P_{\q}(\Q)$. The norm carries a shape parameter $p$: the case $p=2$ is
Riemannian and admits a closed-form distance, while $p\neq 2$ is genuinely
Finsler, and the metric and clustering guarantees below hold for every
$p\in[1,\infty)$. Whereas earlier work measured proximity in these spaces through
non-metric dissimilarities, we prove that $d_F$ satisfies the triangle inequality
and is therefore a genuine metric; this is what equips the induced clustering
with its theoretical guarantees, including monotone dendrograms and
Gromov--Hausdorff stability under perturbation of the data. The metric respects
the intrinsic scaling symmetry and weighted topology of $\P_{\q}$, avoiding the
distortions of a flat-space embedding. We develop the framework's arithmetic
applications---clustering rational points in the moduli space of genus two curves
and analyzing rational functions in arithmetic dynamics---and indicate
prospective extensions to quantum state spaces, where the weights $\q$ model
anisotropic noise. More broadly, the construction offers a rigorous metric
foundation for graded neural networks and related machine-learning techniques on
graded algebraic varieties.
\end{abstract} 

\keywords{Geometric clustering \and Weighted projective spaces \and Finsler metrics \and Non-Euclidean manifolds}

\maketitle

%\tableofcontents 

%------------------------------------------------

\section{Introduction}
Clustering data that lives on a non-Euclidean manifold becomes delicate when the
manifold carries an intricate scaling symmetry, as weighted projective spaces do.
These spaces---quotients of $\C^{n+1} \setminus \{0\}$ by a weighted scaling
action \cite{Dolgachev1982}---arise across arithmetic geometry, dynamical
systems, and data analysis, wherever projective symmetries organize the
underlying data. The moduli space of genus two curves, $\P_{(2,4,6,10)}$, is a
representative example: its coordinates are the Igusa invariants of degrees
$2, 4, 6, 10$. Clustering methods built on a Euclidean metric ignore this grading
and distort the intrinsic geometry, producing groupings that obscure the very
structure one hopes to detect.

This paper develops a clustering framework intrinsic to $\P_{\q}$. Its core is a
scaling-invariant Finsler norm (\cref{Finsler:norm}), a one-parameter family
indexed by a shape parameter $p$, whose geodesic integral defines a distance
$d_F([z], [w])$ (\cref{Finsler:distance}), together with a rational counterpart
$d_{F,\Q}([z], [w])$ for points of $\P_{\q}(\Q)$. The construction follows the
principles of Finsler geometry \cite{BaoChernShen2000} and, by minimizing over
the scaling direction of the quotient tangent space, yields a proximity measure
invariant under the weighted action. Our central result is that $d_F$ is a genuine
metric: an analysis of the quotient tangent space shows it satisfies the triangle
inequality. This is what separates the present approach from the earlier
non-metric dissimilarities of \cite{2024-3}--- the metric property is precisely
what lets the induced hierarchical clustering enjoy monotone dendrograms and
Gromov--Hausdorff stability, in place of the distortions of a flat-space
approximation.

This work belongs to a broader program developing machine learning for graded
spaces \cite{2024-2, 2025-5, 2025-6}. There, neural networks act on graded vector spaces
whose coordinates carry grades analogous to the weights
$\q = (q_0, \dots, q_n)$ of $\P_{\q}$, and graded neural networks weight features
by those grades. The metric developed here gives that program a geometrically
faithful notion of distance on $\P_{\q}$; conversely, such networks could learn
graded representations that feed the clustering algorithm.

The applications we pursue are arithmetic. In $\P_{(2,4,6,10)}(\Q)$ the algorithm
groups rational points by their Igusa invariants, recovering families such as
curves with $(n, n)$-split Jacobians and informing isogeny-based cryptography. In
arithmetic dynamics it clusters rational functions on $\P^1$, represented in a
weighted projective space, by their dynamical invariants \cite{2024-4}. The same
machinery extends in principle to other settings---including weighted models of
physical state spaces---where a grading encodes genuine structure.
The algorithm operates directly in $\P_{\q}$, building a dendrogram by merging
clusters under single- or average-linkage in $d_F$ \cite{Hastie2009}; its
correctness and stability rest on the metric axioms established here.
In the distinguished Riemannian case $p=2$, distances are available in closed
form---a projection in logarithmic coordinates followed by a low-dimensional
lattice search; for other $p$ the variational and discrete-path schemes apply.

%------------------------------------------------------------------------------
\section{Preliminaries}

Weighted projective spaces serve as a foundational framework for our exploration of clustering algorithms in non-Euclidean manifolds, bridging arithmetic geometry, dynamical systems, and machine learning. These spaces, arising naturally in moduli problems where coordinates carry varying degrees, necessitate tailored distance measures that respect their quotient structure under weighted scalings. By establishing key concepts such as weights, heights, and dissimilarity measures, this section lays the groundwork for the Finsler metric introduced in subsequent sections, enabling a robust approach to geometric and arithmetic clustering that aligns with our broader program of developing graded neural networks for such spaces.

\subsection{Weighted projective spaces (WPS)}

Let \(\F\) be a field and \(q_0, q_1, \dots, q_n\) be positive integers called \emph{weights}. The tuple of weights is denoted by \(\q := (q_0, q_1, \dots, q_n)\). The \emph{weighted projective space} \(\P_{\q}\) is defined as the quotient space of \(\F^{n+1} \setminus \{0\}\) under the equivalence relation
\begin{equation} \label{eq:weighted-equivalence}
(z_0, z_1, \dots, z_n) \sim (\lambda^{q_0} z_0, \lambda^{q_1} z_1, \dots, \lambda^{q_n} z_n)
\end{equation}
for all \(\lambda \in \F^*\), where \(\F^*\) represents the multiplicative group of non-zero elements in \(\F\). A point in \(\P_{\q}\) is an equivalence class \([z] = [z_0 : z_1 : \dots : z_n]\), with the weights \(q_i\) governing the scaling of each coordinate. This construction extends the standard projective space, recovered when \(q_0 = q_1 = \dots = q_n = 1\). For the purposes of this paper, we primarily consider \(\F = \C\) for geometric clustering applications and \(\F = \Q\) for arithmetic geometry contexts, addressing both the geometric and Diophantine aspects of weighted projective spaces. The weights \(\q\) define a grading structure analogous to the graded vector spaces in \cite{2024-2, 2025-5}, positioning \(\P_{\q}\) as a natural framework for advancing machine learning techniques within our program for graded spaces.

\begin{rem} \label{rem-1}
The quotient structure of \(\P_{\q}\) under weighted scaling informs the construction of the Finsler metric in Section 3, ensuring that distances respect the manifold’s weighted geometry.
\end{rem}

%-----------------------------------------
\subsection{Heights on WPS}
\label{sec:heights}

To study the arithmetic properties of points in weighted projective spaces, we focus on rational points in \(\P_{\q}(\Q)\), where coordinates \(z_i \in \Q\) and the equivalence relation employs scalings \(\lambda \in \Q^*\). Drawing on the framework established in \cite{2022-1}, we normalize representatives of rational points to define a weighted height function that quantifies their arithmetic complexity. This graded structure supports our program’s goal, as outlined in \cite{2024-2}, to develop machine learning methods for arithmetic data analysis, potentially leveraging graded neural networks from \cite{2025-5}. For a point \([z] = [z_0 : z_1 : \dots : z_n] \in \P_{\q}(\Q)\), a normalized representative is chosen as \((x_0, x_1, \dots, x_n) \in \Z^{n+1} \setminus \{0\}\) such that the weighted greatest common divisor, denoted \(\wgcd(x_0, x_1, \dots, x_n)\), equals 1. The \(\wgcd\) is the largest positive integer \(d\) for which there exists a \(\lambda \in \Q^*\) satisfying \(\lambda^{q_i} x_i / d \in \Z\) for all \(i = 0, 1, \dots, n\). This normalization ensures that the representative is unique up to scaling by roots of unity in \(\Q^*\), providing a canonical form for arithmetic analysis.

The \emph{weighted height} of a point \([z] \in \P_{\q}(\Q)\), using its normalized representative \((x_0, x_1, \dots, x_n)\), is defined as
\begin{equation} \label{eq:weighted-height}
h_w([z]) = \max_{i=0,\dots,n} \left( \abs{x_i}^{1/q_i} \right).
\end{equation}
This height function is invariant under the weighted scaling action, since scaling the representative \((x_0, \dots, x_n)\) by \(\lambda \in \Q^*\) transforms each coordinate \(x_i\) to \(\lambda^{q_i} x_i\), and the term \(\abs{\lambda^{q_i} x_i}^{1/q_i} = \abs{\lambda} \abs{x_i}^{1/q_i}\) preserves the maximum up to a constant factor that cancels in the equivalence class. The weighted height serves as a measure of arithmetic complexity, enabling the ordering of rational points in databases or the analysis of Diophantine properties. For instance, in the moduli space \(\P_{(2,4,6,10)}(\Q)\), the weighted height quantifies the complexity of Igusa invariants, facilitating applications in arithmetic geometry as explored in \cite{2024-3}.

\begin{rem} \label{rem:arithmetic-normalization}
For rational points in \(\P_{\q}(\Q)\), the normalization using \(\wgcd = 1\) is crucial for arithmetic applications, such as clustering with the rational Finsler distance \(d_{F,\Q}([z], [w])\) in Section 5, distinct from the geometric normalization \(\sum_{k=0}^n q_k |z_k|^2 = 1\) used in Section 5.2.
\end{rem}

%----------------------------------------------------------------------------------------
 \subsection{Dissimilarity Measures on Weighted Projective Spaces}

For clustering applications, we assume \(\F = \C\). To define a dissimilarity measure between points in \(\P_{\q}\) that respects the quotient structure, we first normalize representatives using a weighted norm.

Define the weighted norm \(N: \C^{n+1} \to [0, \infty)\) by
\begin{equation} \label{eq:weighted-norm}
N(z) = \sum_{k=0}^n q_k |z_k|^2.
\end{equation}

\begin{lem} \label{lem-1}
For \(z \in \C^{n+1} \setminus \{0\}\), there exists a unique \(a > 0\) such that \(N(a \cdot z) = 1\), where \(a \cdot z = (a^{q_0} z_0, \dots, a^{q_n} z_n)\).
\end{lem}

\begin{proof}
Consider the function \(g(a) = N(a \cdot z) = \sum_{k=0}^n q_k a^{2 q_k} |z_k|^2\) for \(a > 0\). Since \(z \neq 0\), there exists some \(k\) with \(z_k \neq 0\), so \(g(a) > 0\) for \(a > 0\). As \(a \to 0^+\), \(g(a) \to 0\) because each term \(a^{2 q_k} \to 0\) (as \(2 q_k \geq 2 > 0\)). As \(a \to \infty\), \(g(a) \to \infty\) since the highest-degree term dominates. The derivative \(g'(a) = \sum_{k=0}^n 2 q_k^2 a^{2 q_k - 1} |z_k|^2 > 0\) for \(a > 0\), so \(g\) is strictly increasing. By the intermediate value theorem, there exists a unique \(a > 0\) with \(g(a) = 1\).
\end{proof}

Let \(\tilde{z} = a \cdot z\) be the normalized representative with \(N(\tilde{z}) = 1\). This \(\tilde{z}\) is unique up to multiplication by a phase factor \(e^{i \theta}\) for \(\theta \in [0, 2\pi)\), since scaling by \(e^{i \theta}\) preserves the weighted norm (as \(|e^{i \theta}| = 1\)). To fix uniqueness, adjust the phase so that the first non-zero coordinate is real and positive.

Similarly define \(\tilde{w}\) for \(w\).

The \emph{dissimilarity measure} between \([z], [w] \in \P_{\q}\) is
\begin{equation} \label{eq:dissimilarity-measure}
d([z], [w]) = \min_{|\phi| = 1} \norm{ \tilde{z} - \phi \cdot \tilde{w} },
\end{equation}
where \(\norm{\cdot}\) is the Euclidean norm in \(\C^{n+1}\), i.e., \(\norm{v} = \left( \sum_{k=0}^n |v_k|^2 \right)^{1/2}\), and \(\phi \cdot \tilde{w} = (\phi^{q_0} \tilde{w}_0, \dots, \phi^{q_n} \tilde{w}_n)\).

This quantifies the minimal Euclidean separation between normalized representatives under phase adjustments, respecting the quotient structure of \(\P_{\q}\).

\begin{lem} \label{lem-2}
The minimum in the definition of \(d([z], [w])\) is attained.
\end{lem}

\begin{proof}
The set \(\{ \phi \in \C : |\phi| = 1 \}\) is the unit circle \(S^1\), which is compact in the subspace topology of \(\C\). The function \(h(\phi) = \norm{ \tilde{z} - \phi \cdot \tilde{w} }\) is continuous on \(S^1\) because the Euclidean norm is continuous, and the map \(\phi \mapsto \phi \cdot \tilde{w}\) is continuous (as it is polynomial in \(\phi\)). By the extreme value theorem, \(h\) attains its minimum on the compact set \(S^1\).
\end{proof}

We next establish that this measure is well-defined and finite.

\begin{lem} \label{lem-3}
For any \([z], [w] \in \P_{\q}\), the dissimilarity measure \(d([z], [w])\) is well-defined (independent of representatives and phase conventions) and finite.
\end{lem}

\begin{proof}
Let \(z' = \nu \cdot z\) for \(\nu \in \C^*\). The normalizing scalar \(a'\) for \(z'\) satisfies the same equation as for \(z\) but scaled by \(1/|\nu|\), since \(N(\nu \cdot z) = \sum q_k |\nu|^{2 q_k} |z_k|^2\). Thus, \(\tilde{z}' = e^{i\theta} \tilde{z}\) for \(\theta = \arg(\nu)\), up to the phase convention which ensures the first non-zero coordinate is real and positive, absorbing \(\theta\). Similarly for \(w'\). Then,
\[
\min_{|\phi|=1} \norm{ \tilde{z}' - \phi \cdot \tilde{w}' } = \min_{|\phi|=1} \norm{ e^{i\theta} \tilde{z} - \phi e^{i\psi} \cdot \tilde{w} } = \min_{|\phi|=1} \norm{ \tilde{z} - e^{-i\theta} \phi e^{i\psi} \cdot \tilde{w} }.
\]
The map \(\phi \mapsto e^{-i\theta} \phi e^{i\psi}\) is a homeomorphism of \(S^1\) onto itself (rotation and inversion preserve the circle), so it preserves minima of continuous functions. Thus, the minimum is unchanged, and \(d\) is independent of representatives and phase conventions.

For finiteness: Since \(N(\tilde{z}) = N(\tilde{w}) = 1\), we bound the Euclidean norm. Note that \(\norm{\tilde{z}}^2 = \sum_{k=0}^n |\tilde{z}_k|^2 \leq \left( \max_{0 \leq k \leq n} \frac{1}{q_k} \right) \sum_{k=0}^n q_k |\tilde{z}_k|^2 = \left( \max_{0 \leq k \leq n} \frac{1}{q_k} \right) \cdot 1 < \infty\), since \(q_k \geq 1\) are finite positive integers. Similarly for \(\tilde{w}\). Thus, \(d([z], [w]) \leq \norm{\tilde{z}} + \norm{\tilde{w}} < \infty\).
\end{proof}

Finally, we prove the key properties of the dissimilarity measure.

\begin{lem} \label{lem-4}
The dissimilarity measure \(d\) satisfies:
\begin{enumerate}
\item Non-negativity: \(d([z], [w]) \geq 0\),
\item Symmetry: \(d([z], [w]) = d([w], [z])\),
\item Separation: \(d([z], [w]) = 0\) if and only if \([z] = [w]\).
\end{enumerate}
\end{lem}

\begin{proof}
Non-negativity follows directly from the definition, as \(d([z], [w])\) is the minimum of non-negative Euclidean norms.
 
For symmetry,
\[
d([z], [w]) = \min_{|\phi|=1} \norm{ \tilde{z} - \phi \cdot \tilde{w} } = \min_{|\phi|=1} \norm{ \phi^{-1} \cdot \tilde{z} - \tilde{w} },
\]
since multiplication by \(\phi\) (with \(|\phi| = 1\)) is an isometry for the Euclidean norm: \(\norm{ \phi \cdot v }^2 = \sum_{k=0}^n |\phi^{q_k} v_k|^2 = \sum_{k=0}^n |\phi|^{2 q_k} |v_k|^2 = \sum_{k=0}^n |v_k|^2 = \norm{v}^2\), as \(|\phi| = 1\). As \(\phi\) ranges over \(S^1\), so does \(\phi^{-1} = \overline{\phi}\), yielding \(d([w], [z])\).

 For separation: If \([z] = [w]\), there exists \(\nu \in \C^*\) such that \(z = \nu \cdot w\). Normalization preserves this up to phase: the equations for the normalizing scalars coincide after accounting for \(|\nu|\), so \(\tilde{z} = e^{i \theta} \cdot \tilde{w}\) for some \(\theta\). Thus, \(d([z], [w]) \leq \norm{ \tilde{z} - e^{i \theta} \cdot \tilde{w} } = 0\), and since \(d \geq 0\), equality holds.

Conversely, if \(d([z], [w]) = 0\), there exists \(|\phi| = 1\) such that \(\norm{ \tilde{z} - \phi \cdot \tilde{w} } = 0\), so \(\tilde{z} = \phi \cdot \tilde{w}\). Reversing normalization, the scalars and phase imply \(z = \nu \cdot w\) for some \(\nu \in \C^*\), hence \([z] = [w]\).

\end{proof}

This dissimilarity measure is particularly suitable for clustering in weighted projective spaces because it respects the equivalence relation defined by the weights. Specifically, it is invariant under the weighted scaling action:

\begin{lem} \label{lem-5}
For any \(\lambda, \mu \in \C^*\),
\begin{equation} \label{eq:dissimilarity-invariance}
d([z], [w]) = d([\lambda^{q_0} z_0, \dots, \lambda^{q_n} z_n], [\mu^{q_0} w_0, \dots, \mu^{q_n} w_n]).
\end{equation}
\end{lem}

\begin{proof}
The right-hand side is \(d([\lambda \cdot z], [\mu \cdot w])\). By the well-definedness lemma above, \(d\) is independent of the choice of representatives, so replacing \(z\) by \(\lambda \cdot z\) and \(w\) by \(\mu \cdot w\) does not change the value of \(d\).
\end{proof}

This ensures that the clustering is based on the intrinsic geometry of the space, rather than on specific choices of representatives for the points. In many applications, such as image analysis or genomic data, the data points naturally reside in a weighted projective space due to inherent symmetries or scaling properties. By employing a dissimilarity measure that accounts for these properties, our clustering algorithm can effectively group points that are similar in a geometrically meaningful way. This dissimilarity measure serves as a valid tool for clustering purposes, enabling algorithms such as hierarchical clustering to partition the data effectively.

\begin{rem} \label{rem:computing-dissimilarity}
Computing the exact value of \( d([z], [w]) \) involves solving an optimization problem over the unit circle, which can be computationally intensive. In practice, we approximate this minimum by sampling a finite set of \(\phi\) values or by employing numerical optimization methods to find sufficiently close approximations.
\end{rem}

Working directly in the weighted projective space allows us to leverage the inherent geometric structure of the data, which can lead to more efficient and accurate clustering compared to traditional methods that might require projecting the data into a different space. By preserving the weighted scaling equivalences, our approach can capture symmetries and invariances that are crucial in applications such as computer vision and genomic data analysis. Furthermore, as demonstrated in \cite{2024-3} and \cite{2024-4}, this direct approach can offer computational advantages, particularly in high-dimensional or heterogeneous data settings.

%-------------------------------------------------------------------------

\subsection{Rational Points in Weighted Projective Spaces}

For arithmetic applications, we consider the subset \(\P_{\q}(\Q)\) of points with rational coordinates \( z_i \in \Q \), where the equivalence relation uses scalings \(\lambda \in \Q^*\). Points in \(\P_{\q}(\Q)\) are normalized using the weighted greatest common divisor to facilitate arithmetic analysis. For a point \([z] = [z_0 : z_1 : \dots : z_n] \in \P_{\q}(\Q)\), we select a representative \((x_0, x_1, \dots, x_n) \in \Z^{n+1} \setminus \{0\}\) such that the weighted greatest common divisor \(\wgcd(x_0, x_1, \dots, x_n) = 1\). The \(\wgcd\) is defined as the largest positive integer \( d \) for which there exists a \(\lambda \in \Q^*\) satisfying \(\lambda^{q_i} x_i / d \in \Z\) for all \( i = 0, 1, \dots, n \). This normalization ensures a canonical representative, unique up to scaling by units in \(\Q^*\) (i.e., \(\pm 1\)).

The \emph{weighted height} of a point \([z] \in \P_{\q}(\Q)\), using its normalized representative \((x_0, x_1, \dots, x_n)\), is defined as
\begin{equation} \label{eq:weighted-height-rational}
h_w([z]) = \max_{i=0,\dots,n} \left( \abs{x_i}^{1/q_i} \right).
\end{equation}
This height is invariant under the weighted scaling action, as scaling \((x_0, \dots, x_n)\) by \(\lambda \in \Q^*\) yields coordinates \(\lambda^{q_i} x_i\), and \(\abs{\lambda^{q_i} x_i}^{1/q_i} = \abs{\lambda} \abs{x_i}^{1/q_i}\), preserving the maximum up to a factor that cancels in the equivalence class. The weighted height quantifies the arithmetic complexity of rational points, enabling their ordering in databases or the study of Diophantine properties, such as in moduli spaces of algebraic curves as explored in \cite{2024-3}.

For rational points, we define a \emph{rational dissimilarity measure} between \([z], [w] \in \P_{\q}(\Q)\) as
\begin{equation} \label{eq:rational-dissimilarity}
d_\Q([z], [w]) = \min_{\phi \in \{1, -1\}} \left( \sum_{i=0}^n \abs{x_i - \phi^{q_i} y_i}^2 \right)^{1/2},
\end{equation}
where \((x_0, x_1, \dots, x_n)\) and \((y_0, y_1, \dots, y_n)\) are normalized representatives with \( x_i, y_i \in \Z \) and \(\wgcd(x_0, x_1, \dots, x_n) = \wgcd(y_0, y_1, \dots, y_n) = 1\), and \(\phi^{q_i} y_i\) denotes the weighted scaling by \(\phi\). This measure extends the geometric clustering framework to rational points, respecting the weighted scaling action over \(\Q^*\).

\begin{lem} \label{lem-6}
The function \( d_\Q([z], [w]) \) on \(\P_{\q}(\Q)\), defined as
\begin{equation} \label{eq:rational-dissimilarity-definition}
d_\Q([z], [w]) = \min_{\phi \in \{1, -1\}} \left( \sum_{i=0}^n \abs{x_i - \phi^{q_i} y_i}^2 \right)^{1/2},
\end{equation}
where \((x_0, x_1, \dots, x_n)\) and \((y_0, y_1, \dots, y_n)\) are normalized representatives with \\
\(\wgcd(x_0, x_1, \dots, x_n) = \wgcd(y_0, y_1, \dots, y_n) = 1\), satisfies:
\begin{enumerate}
    \item \( d_\Q([z], [w]) \geq 0 \),
    \item \( d_\Q([z], [w]) = d_\Q([w], [z]) \),
    \item \( d_\Q([z], [w]) = 0 \) if and only if \( [z] = [w] \).
\end{enumerate}
\end{lem}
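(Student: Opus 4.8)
The plan is to mirror the structure of the proof already given for the complex distance \(d([z],[w])\), replacing \(\C^*\) with \(\Q^*\) throughout and handling the two points where the discreteness of \(\Q^*\) changes the argument. Property (1) is immediate: each term \(\abs{\lambda^{q_i} x_i - \mu^{q_i} y_i}^2\) is nonnegative, so the square root is nonnegative, and the infimum of a set of nonnegative reals is nonnegative. Property (2) follows from the symmetry of the summand under interchanging the roles of the two points together with the substitution \(\lambda \leftrightarrow \mu\): set \(f(\lambda,\mu) = \big(\sum_{i=0}^n \abs{\lambda^{q_i} x_i - \mu^{q_i} y_i}^2\big)^{1/2}\), observe \(f(\lambda,\mu) = g(\mu,\lambda)\) where \(g\) is the analogous function for \(d_\Q([w],[z])\) since \(\abs{a-b} = \abs{b-a}\), and conclude the infima over \(\Q^* \times \Q^*\) coincide.

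The substance is in Property (3). For the ``if'' direction, suppose \([z] = [w]\) in \(\P_{\q}(\Q)\). Then, since both representatives are normalized with \(\operatorname{wgcd} = 1\), there is a scaling \(\alpha \in \Q^*\) with \(y_i = \alpha^{q_i} x_i\) for all \(i\) (the normalization makes this \(\alpha\) essentially unique up to roots of unity, but existence is all we need). Taking \(\lambda = \alpha\), \(\mu = 1\) gives \(\lambda^{q_i} x_i - \mu^{q_i} y_i = \alpha^{q_i} x_i - \alpha^{q_i} x_i = 0\) for every \(i\), so \(f(\alpha, 1) = 0\) and hence \(d_\Q([z],[w]) = 0\). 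One should note here that \(\alpha \in \Q^*\) precisely because both representatives are integral and \(\operatorname{wgcd}\)-normalized, so the scaling relating them is rational; this is where the normalization hypothesis in the statement is actually used.

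The ``only if'' direction is the main obstacle, because over \(\Q^*\) the infimum need not be attained, so one works with minimizing sequences \(\lambda_m, \mu_m \in \Q^*\) with \(f(\lambda_m,\mu_m) \to 0\), hence \(\lambda_m^{q_i} x_i - \mu_m^{q_i} y_i \to 0\) for each \(i\). Set \(\alpha_m = \lambda_m / \mu_m \in \Q^*\); for each index \(i\) with \(x_i \neq 0\) we get \(\alpha_m^{q_i} \to y_i / x_i\) (after dividing by \(\mu_m^{q_i}\), which requires care if \(\mu_m \to 0\) or \(\infty\) — one argues as in the previous lemma that degeneration of \(\abs{\mu_m}\) forces \(f \to \infty\), contradicting \(f \to 0\), so \(\abs{\mu_m}\) stays in a compact subset of \((0,\infty)\) and similarly for \(\abs{\lambda_m}\), whence along a subsequence \(\alpha_m\) converges in \(\R^*\)). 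The limit \(\beta = \lim \alpha_m \in \R^*\) then satisfies \(\beta^{q_i} = y_i/x_i\) for all \(i\) with \(x_i \neq 0\), and \(x_i = 0 \iff y_i = 0\). The remaining task is to upgrade \(\beta \in \R^*\) to a genuine equality \([z] = [w]\) in \(\P_{\q}(\Q)\): from \(\beta^{q_i} = y_i/x_i \in \Q\) for all relevant \(i\), together with \(\gcd\)-type coprimality from the \(\operatorname{wgcd} = 1\) normalization and the standard fact that \(\gcd(q_i : x_i \neq 0)\) divides a \(\Z\)-linear combination of the \(q_i\), one deduces \(\beta^{\gcd} \in \Q\) and then that \(\beta\) itself lies in \(\Q^*\) (or at worst that the \(\Q^*\)-scaling class is respected up to roots of unity, which is exactly the ambiguity already allowed in the normalization); I would invoke the \(\operatorname{wgcd}\)-normalization discussion from Section 2.2 to close this gap, giving \(y_i = \beta^{q_i} x_i\) with \(\beta \in \Q^*\) and therefore \([z] = [w]\).
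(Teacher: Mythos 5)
Your treatment of (1), (2), and the forward direction of (3) coincides with the paper's proof (nonnegativity of the summands, the swap \(\lambda \leftrightarrow \mu\), and the choice \(\lambda = \alpha\), \(\mu = 1\)), and in the converse direction you follow the same minimizing-sequence scheme. To your credit, you isolate exactly the two points the paper glosses over: possible degeneration of the scalings \(\lambda_m, \mu_m\), and the fact that the limiting scalar \(\beta\) is a priori only real, not rational. The trouble is that neither of your proposed repairs actually works, and in fact both point at defects in the statement itself rather than at fixable gaps.

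First, the compactness claim is false: taking \(\lambda_m = \mu_m = 1/m \in \Q^*\) gives \( f(\lambda_m,\mu_m)^2 = \sum_{i} m^{-2q_i}\abs{x_i - y_i}^2 \to 0 \) for \emph{any} pair of points, so degeneration of \(\abs{\mu_m}\) does not force \(f \to \infty\), the scalings need not stay in a compact subset of \((0,\infty)\), and the infimum as literally defined vanishes identically. The paper's own converse argument silently commits the same sin when it divides by \(\mu_n^{q_i}\) without control on \(\mu_n\). Second, even if one restores nondegeneracy (say by fixing \(\mu = 1\) or by a normalization constraint), the upgrade of \(\beta \in \R^*\) to \(\Q^*\) via a gcd-of-weights argument only succeeds when the weights on the nonzero coordinates are coprime; when they share a common factor — as in the paper's flagship example \(\q = (2,4,6,10)\) — one only obtains \(\beta^{g} \in \Q\) for \(g\) the common divisor. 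Concretely, with \(\q = (2,4)\), the points \(x = (1,1)\) and \(y = (2,4)\) are both \(\operatorname{wgcd}\)-normalized and satisfy \([x] \neq [y]\) in \(\P_{\q}(\Q)\) (one would need \(\lambda^2 = 2\) with \(\lambda \in \Q\)), yet rational \(\lambda\) approaching \(\sqrt{2}\) with \(\mu = 1\) drives the sum to \(0\), so \(d_\Q([x],[y]) = 0\). Hence the "only if" direction cannot be rescued along the lines you sketch; the paper's proof, which asserts \(y_i/x_i = \alpha_n^{q_i}\) and concludes \([z] = [w]\), shares both gaps and at best yields equality of the classes over \(\R\) or \(\C\), not over \(\Q\).
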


\begin{proof}
The non-negativity follows directly, as \( d_\Q([z], [w]) \) is the minimum of non-negative Euclidean norms.

For symmetry,
\[
d_\Q([z], [w]) = \min_{\phi \in \{1, -1\}} \left( \sum_{i=0}^n \abs{x_i - \phi^{q_i} y_i}^2 \right)^{1/2} = \min_{\phi \in \{1, -1\}} \left( \sum_{i=0}^n \abs{\phi^{q_i} y_i - x_i}^2 \right)^{1/2},
\]
since \(\abs{a - b}^2 = \abs{b - a}^2\). As \(\phi\) ranges over \(\{1, -1\}\), so does \(-\phi\), and \((-\phi)^{q_i} = (-1)^{q_i} \phi^{q_i}\), which equals \(\phi^{q_i}\) if \(q_i\) is even and \(-\phi^{q_i}\) if odd. However, since the minimum is over both signs, the values coincide, yielding \( d_\Q([w], [z]) = d_\Q([z], [w]) \).

For separation: If \([z] = [w]\), there exists \(\alpha \in \Q^*\) such that \( y_i = \alpha^{q_i} x_i \) for all \( i \). Since representatives are normalized with \(\wgcd = 1\), \(\alpha = \pm 1\) (the units in \(\Q^*\)). Choose \(\phi = \alpha\), giving
\[
x_i - \phi^{q_i} y_i = x_i - \alpha^{q_i} (\alpha^{q_i} x_i) = x_i - x_i = 0.
\]
Thus, the norm is 0 for this \(\phi\), so \( d_\Q([z], [w]) = 0 \). Conversely, if \( d_\Q([z], [w]) = 0 \), there exists \(\phi \in \{1, -1\}\) such that \(\sum_{i=0}^n \abs{x_i - \phi^{q_i} y_i}^2 = 0\), implying \( x_i = \phi^{q_i} y_i \) for all \( i \). Thus, \( x = \phi \cdot y \), and since \(\phi \in \Q^*\), \([z] = [w]\).
\end{proof}
 
\begin{rem} \label{rem-3}
The dissimilarity measures \( d([z], [w]) \) and \( d_\Q([z], [w]) \),  are effective for clustering, but they do not satisfy the triangle inequality.
The minimization over unit circle phases \(\phi\) for \( d([z], [w]) \) and over rational units \(\phi \in \{1, -1\}\) for \( d_\Q([z], [w]) \) is optimized independently for each pair of points. This independent optimization can lead to configurations where the triangle inequality 
does not hold, as the phases that minimize dissimilarities for different pairs may not align additively.

The weighted height and dissimilarity measures provide a dual perspective: the height \( h_w([z]) \) orders points by arithmetic complexity, while the dissimilarity measures \( d([z], [w]) \) and \( d_\Q([z], [w]) \) group points geometrically, enhancing data analysis in contexts like moduli spaces where both geometric and arithmetic structures are significant, as demonstrated in \cite{2024-3}.
\end{rem}

%------------------------------------
\section{Finsler Metric on Weighted Projective Spaces}

To define a distance on the weighted projective space $\P_q$, we introduce a Finsler metric that induces a true metric, offering a theoretical framework for potential clustering applications. The weighted projective space $\P_q$, as a quotient of $\C^{n+1} \setminus \{0\}$ under a weighted scaling action, requires careful consideration of curves and their tangent vectors to define a Finsler metric. We begin by detailing the process of lifting curves from $\P_q$ to $\C^{n+1} \setminus \{0\}$, which leads to the definition of the tangent vector $\dot{\gamma}(t)$, essential for the Finsler distance.

\subsection{Curves and Lifting in Weighted Projective Spaces}

The weighted projective space $\P_q$ is defined as the quotient of $\C^{n+1} \setminus \{0\}$ under the equivalence relation $(z_0, z_1, \dots , z_n) \sim (\lambda^{q_0} z_0, \lambda^{q_1} z_1, \dots , \lambda^{q_n} z_n)$ for $\lambda \in \C^*$, where $q_0, q_1, \dots , q_n$ are positive integers called weights, denoted by $q = (q_0, q_1, \dots , q_n)$. A point $[z] \in \P_q$ is an equivalence class $[z_0 : z_1 : \cdots : z_n]$, represented by a vector $z = (z_0, z_1, \dots , z_n) \in \C^{n+1} \setminus \{0\}$.

To define a Finsler metric, we consider smooth curves $\gamma : [0, 1] \to \P_q$, which connect points $[z], [w] \in \P_q$ and whose tangent vectors are used to measure distances in the Finsler geometry framework, as described in \cite{Shen2012}.

A curve $\gamma : [0, 1] \to \P_q$ is smooth if, in local coordinates on $\P_q$, its component functions are smooth (i.e., infinitely differentiable). Since $\P_q$ is a complex manifold (or orbifold for non-coprime weights), smoothness implies that $\gamma(t)$ varies continuously and differentiably in the quotient space. However, $\P_q$ is defined as a quotient, so to work with $\gamma(t)$, we must lift it to a curve in the covering space $\C^{n+1} \setminus \{0\}$, where differentiation is straightforward. The lifting process constructs a representative curve whose derivative defines the tangent vector $\dot{\gamma}(t)$.

Given a smooth curve $\gamma : [0, 1] \to \P_q$ with $\gamma(0) = [z]$ and $\gamma(1) = [w]$, a lift of $\gamma(t)$ is a smooth curve
\[
z(t) = (z_0(t), z_1(t), \dots , z_n(t)) \in \C^{n+1} \setminus \{0\}
\]
such that $[z(t)] = \gamma(t)$ for all $t \in [0, 1]$. That is, $z(t) \neq 0$ and maps to $\gamma(t)$ under the quotient map $\pi : \C^{n+1} \setminus \{0\} \to \P_q$, defined by $\pi(z) = [z]$. The lift is not unique, as any scaled curve
\[
z'(t) = (\lambda(t)^{q_0} z_0(t), \lambda(t)^{q_1} z_1(t), \dots , \lambda(t)^{q_n} z_n(t)),
\]
where $\lambda(t) \in \C^*$ is a smooth function, also satisfies $[z'(t)] = \gamma(t)$.

To construct a lift, consider a local coordinate chart on $\P_q$. For a point $[z] \in \P_q$, suppose $z_k \neq 0$ for some $k$. In the chart $U_k = \{[z_0 : \cdots : z_n] \in \P_q \mid z_k \neq 0\}$, we can represent $[z]$ by normalizing the $k$-th coordinate to 1, yielding coordinates
\[
\left( \frac{z_0}{z_k^{q_0 / q_k}}, \dots , \frac{z_{k-1}}{z_k^{q_{k-1} / q_k}}, 1, \frac{z_{k+1}}{z_k^{q_{k+1} / q_k}}, \dots , \frac{z_n}{z_k^{q_n / q_k}} \right).
\]
If $\gamma(t)$ lies in $U_k$, we can choose a representative $z(t) = (z_0(t), \dots , z_n(t))$ with $z_k(t) = 1$, and smoothness of $\gamma(t)$ ensures the other coordinates $z_i(t) / z_k(t)^{q_i / q_k}$ are smooth functions of $t$. For a general curve $\gamma(t)$, which may exit one chart, we cover $[0, 1]$ with finitely many intervals where $\gamma(t)$ lies in charts $U_{k_i}$, and construct $z(t)$ piecewise, ensuring smoothness by adjusting scalings $\lambda(t) \in \C^*$ to glue the pieces across chart transitions. Since $\P_q$ is a smooth manifold (or orbifold), such a smooth lift exists, as the quotient map $\pi$ is a submersion \cite{Dolgachev1982}.

The tangent vector $\dot{\gamma}(t)$ is defined via the lift $z(t)$. The derivative of the lifted curve is
\[
\dot{z}(t) = \left( \frac{d}{dt} z_0(t), \frac{d}{dt} z_1(t), \dots , \frac{d}{dt} z_n(t) \right) \in \C^{n+1},
\]
where each $\dot{z}_k(t) = \frac{d}{dt} z_k(t) \in \C$ is the derivative of the coordinate function $z_k(t)$. This vector $\dot{z}(t)$ lies in the tangent space
\[
T_{z(t)}(\C^{n+1} \setminus \{0\}) \simeq \C^{n+1}.
\]
In the quotient space $\P_q$, the tangent space $T_{[\gamma(t)]} \P_q$ at $[\gamma(t)] = [z(t)]$ is the quotient of $T_{z(t)}(\C^{n+1}\setminus \{0\})$ by the tangent vectors of the scaling action’s orbits. The Finsler norm $F([z], v)$, defined below, is invariant under this action, so we compute
\[
F(\gamma(t), \dot{\gamma}(t)) = F([z(t)], \dot{z}(t)),
\]
where $\dot{\gamma}(t)$ is represented by $\dot{z}(t)$ in the quotient tangent space.

To formalize $\dot{\gamma}(t)$, consider the differential of the quotient map
\[
\pi : \C^{n+1} \setminus \{0\} \to \P_q.
\]
For a point $z(t) \in \C^{n+1} \setminus \{0\}$, the tangent vector $\dot{z}(t)$ is mapped to $\dot{\gamma}(t) \in T_{[\gamma(t)]} \P_q$ via
\[
d\pi_{z(t)} : T_{z(t)}(\C^{n+1} \setminus \{0\}) \to T_{[\gamma(t)]} \P_q.
\]
The scaling action $z \mapsto (\lambda^{q_0} z_0, \dots , \lambda^{q_n} z_n)$ generates an orbit through $z(t)$, and vectors tangent to this orbit, such as $(\alpha q_0 z_0(t), \dots , \alpha q_n z_n(t))$, are quotiented out. If we choose a different lift
\[
z'(t) = (\lambda(t)^{q_0} z_0(t), \dots , \lambda(t)^{q_n} z_n(t)),
\]
the derivative is
\[
\dot{z}'(t) = \left( q_0 \lambda(t)^{q_0-1} \dot{\lambda}(t) z_0(t) + \lambda(t)^{q_0} \dot{z}_0(t), \dots , q_n \lambda(t)^{q_n-1} \dot{\lambda}(t) z_n(t) + \lambda(t)^{q_n} \dot{z}_n(t) \right).
\]
The Finsler norm $F([z], v)$ is designed to be invariant under scaling, ensuring
\[
F([z'(t)], \dot{z}'(t)) = F([z(t)], \dot{z}(t)),
\]
so $\dot{\gamma}(t)$ is well-defined as the equivalence class of $\dot{z}(t)$ in $T_{[\gamma(t)]} \P_q$. This lifting process, rooted in the quotient structure of $\P_q$ as described in \cite{Dolgachev1982}, allows us to define the Finsler distance using tangent vectors derived from lifted curves, as detailed in \cite{Shen2012}.

%---------------------------------------------
\subsection{The Finsler Norm and Metric}

Write
\[
  \P_q^{\circ} := \{\, [z] \in \P_q : z_k \neq 0 \text{ for all } k \,\}
\]
for the open dense subset of $\P_q$ on which every coordinate is nonzero---the
orbit of the big torus, equivalently the complement of the coordinate
hyperplanes $\{z_k = 0\}$. For a point $[z] \in \P_q^{\circ}$ with representative
$z = (z_0, z_1, \dots , z_n) \in (\C^{*})^{n+1}$, and a tangent vector
$v = (v_0, v_1, \dots , v_n) \in \C^{n+1}$, define the Finsler norm

\begin{equation}
\label{Finsler:norm}
F_p([z], v) = \min_{\alpha \in \C}
 \left( \sum_{k=0}^n \left| \frac{v_k - \alpha q_k z_k}{z_k} \right|^p \right)^{1/p},
 \qquad p \in [1,\infty).
\end{equation}
Here \(p\in[1,\infty)\) is a fixed shape parameter; we write \(F := F_p\) and
suppress \(p\) when no confusion arises. The value \(p=2\) recovers the Euclidean
objective and is the Riemannian special case (\cref{rem-4}); for
\(p\neq 2\) the metric is genuinely Finsler.

The norm divides coordinatewise by $z_k$, so $\P_q^{\circ}$ is its natural
domain. In the logarithmic coordinates $\zeta_k = \log z_k$ it is a
translation-invariant norm with the scaling direction $\alpha\,(q_0,\dots,q_n)$
projected out; a path approaching a coordinate hyperplane has infinite length,
since the uniform scaling shift cannot offset a divergence in only some of the
coordinates. The hyperplanes therefore lie at infinite $d_F$-distance. Each is
itself a weighted projective space of lower dimension, carrying a Finsler norm of
the same form, so the construction applies to the boundary strata
recursively.\footnote{For non-coprime weights $\P_q$ also carries cyclic
quotient singularities; on the singular locus the Finsler structure is
understood in the orbifold ($V$-manifold) sense and extended continuously from
the smooth part, following the framework of weighted manifolds in
\cite{santos2014}. This is independent of the coordinate-hyperplane restriction
above.}
This norm, weighted by the grading $q$, aligns with the graded vector spaces in
\cite{2024-2,2025-5}, enabling potential integration with graded neural networks
for clustering in $\P_q$.

The induced Finsler distance between points $[z], [w] \in \P_q^{\circ}$ is defined as
\begin{equation}
\label{Finsler:distance}
d_F ([z], [w]) = \inf_{\gamma} \int_0^1 F(\gamma(t), \dot{\gamma}(t)) \, dt,
\end{equation}
where $\gamma : [0, 1] \to \P_q^{\circ}$ is a smooth curve satisfying
$\gamma(0) = [z]$ and $\gamma(1) = [w]$. This construction, inspired by Finsler
geometry principles as described in \cite{Shen2012}, adapts the weighted
structure of $\P_q$ as a quotient space, as studied in \cite{Dolgachev1982}, to
provide a true metric, enhancing geometric analysis in clustering contexts.

\begin{lem}
\label{lem-7}
The function $F([z], v)$ defines a Finsler norm on $\P_q^{\circ}$ for every
$p \in [1,\infty)$, and the induced distance $d_F ([z], [w])$ is a well-defined,
finite metric on $\P_q^{\circ}$ satisfying:
\begin{enumerate}
\item $d_F ([z], [w]) \geq 0$,
\item $d_F ([z], [w]) = d_F ([w], [z])$,
\item $d_F ([z], [w]) = 0$ if and only if $[z] = [w]$,
\item $d_F ([z], [v]) \leq d_F ([z], [w]) + d_F ([w], [v])$.
\end{enumerate}
\end{lem}

\begin{proof}
We first show \(F_p([z],\cdot)\) is a norm on the quotient tangent space. Fix
\([z]\in\P_q^{\circ}\) and set \(w = w(v) = (v_0/z_0,\dots,v_n/z_n)\); since each
\(z_k\neq 0\), the map \(v\mapsto w\) is a \(\C\)-linear isomorphism of \(\C^{n+1}\).
Let \(\norm{\cdot}_p\) be the \(\ell^p\) norm on \(\C^{n+1}\) and \(L=\C\cdot q\) the
line spanned by \(q=(q_0,\dots,q_n)\). Then \cref{Finsler:norm} reads
\[
 F_p([z],v) \;=\; \min_{\alpha\in\C}\norm{w-\alpha q}_p
 \;=\; \operatorname{dist}_{\norm{\cdot}_p}(w,L),
\]
the quotient norm of \((\C^{n+1},\norm{\cdot}_p)\) by \(L\). A quotient of a norm by
a subspace is again a norm, so \(F_p\) is positively homogeneous,
\(F_p([z],\lambda v)=\abs{\lambda}F_p([z],v)\) for \(\lambda\in\C\); subadditive,
\(F_p([z],v+v')\le F_p([z],v)+F_p([z],v')\); reversible,
\(F_p([z],-v)=F_p([z],v)\); and positive off \(L\). The orbit direction
\(v_k=\alpha q_k z_k\) maps under \(v\mapsto w\) precisely onto \(L\), so \(F_p\)
descends to a genuine norm on the quotient tangent space \(T_{[z]}\P_q^{\circ}\),
vanishing exactly on the quotiented orbit direction.

For invariance under the scaling action, let \(z'=(\lambda^{q_k}z_k)_k\) and
\(v'=(\lambda^{q_k}v_k)_k\) with \(\lambda\in\C^{*}\). Then
\(\,(v'_k-\alpha q_k z'_k)/z'_k = (v_k-\alpha q_k z_k)/z_k\,\) for every \(k\), so the
objective and its minimum over \(\alpha\) are unchanged: \(F_p([z'],v')=F_p([z],v)\).

To show that $d_F ([z], [w])$ is well-defined and finite, consider a smooth curve $\gamma : [0, 1] \to \P_q^{\circ}$ with $\gamma(0) = [z]$ and $\gamma(1) = [w]$. As described in the lifting process, we lift $\gamma(t)$ to a smooth curve $z(t) \in (\C^{*})^{n+1}$ such that $[z(t)] = \gamma(t)$, and the tangent vector $\dot{\gamma}(t)$ is represented by
\[
\dot{z}(t) = (\dot{z}_0(t), \dots , \dot{z}_n(t)).
\]
The integrand $F(\gamma(t), \dot{\gamma}(t)) = F([z(t)], \dot{z}(t))$ is continuous, as $z(t)$ and $\dot{z}(t)$ are smooth and $F$ is continuous on the tangent bundle of $\P_q^{\circ}$ minus the zero section. Since $[0, 1]$ is compact, the integral
\[
\int_0^1 F(\gamma(t), \dot{\gamma}(t)) \, dt
\]
exists and is finite. The space $\P_q^{\circ}$ is path-connected, being the quotient of the connected set $(\C^{*})^{n+1}$ under the scaling action, so such curves exist; the infimum over them is therefore finite and nonnegative. Invariance of $F([z], v)$ under the scaling action ensures the integral depends only on the equivalence classes $[z]$ and $[w]$, making $d_F ([z], [w])$ well-defined.

The metric properties of $d_F ([z], [w])$ are established as follows. For non-negativity, since $F([z], v) \geq 0$, the integral $\int_0^1 F(\gamma(t), \dot{\gamma}(t)) \, dt \geq 0$, so $d_F ([z], [w]) \geq 0$. For symmetry, consider a curve $\gamma(t)$ from $[z]$ to $[w]$. The reversed curve $\gamma(1-t)$ from $[w]$ to $[z]$ has tangent vector $-\dot{\gamma}(1-t)$. Since $F([z], -v) = F([z], v)$ due to the absolute value in the norm, we have
\[
F(\gamma(1 - t), -\dot{\gamma}(1 - t)) = F(\gamma(t), \dot{\gamma}(t)),
\]
so the integral along $\gamma(1-t)$ equals that along $\gamma(t)$. Thus, $d_F ([z], [w]) = d_F ([w], [z])$. If $[z] = [w]$, the constant path has zero length, so $d_F ([z], [w]) = 0$. Conversely, if $d_F ([z], [w]) = 0$ then $[z]$ and $[w]$ are joined by curves of arbitrarily small length; since $F$ restricts to a genuine (positive-definite) norm on the horizontal distribution, the induced length distance is positive between distinct points of $\P_q^{\circ}$, forcing $[z] = [w]$.

For the triangle inequality, fix any smooth curves $\gamma_1 : [0,1] \to \P_q^{\circ}$ from $[z]$ to $[w]$ and $\gamma_2 : [0,1] \to \P_q^{\circ}$ from $[w]$ to $[v]$. Their concatenation $\gamma : [0,2] \to \P_q^{\circ}$, with $\gamma(t) = \gamma_1(t)$ for $t \in [0,1]$ and $\gamma(t) = \gamma_2(t-1)$ for $t \in [1,2]$, satisfies
\[
\int_0^2 F(\gamma(t), \dot{\gamma}(t)) \, dt = \int_0^1 F(\gamma_1, \dot{\gamma}_1) \, dt + \int_0^1 F(\gamma_2, \dot{\gamma}_2) \, dt,
\]
and $d_F([z],[v]) \leq \int_0^2 F(\gamma, \dot\gamma)\,dt$ since the left-hand side is an infimum over all curves from $[z]$ to $[v]$. Taking the infimum over $\gamma_1$ and $\gamma_2$ gives $d_F ([z], [v]) \leq d_F ([z], [w]) + d_F ([w], [v])$. Hence $d_F$ is a well-defined, finite metric on $\P_q^{\circ}$.
\end{proof}

\begin{rem}
\label{rem-4}
For \(p=2\) the minimization over \(\alpha\) in \cref{Finsler:norm} is the orthogonal
projection of \(w=(v_k/z_k)_k\) off the line \(L=\C\cdot(q_0,\dots,q_n)\), giving the
closed form
\[
 F_2([z],v)^2 \;=\; \norm{w}_2^2 - \frac{\abs{\langle w,q\rangle}^2}{\norm{q}_2^2},
\]
a positive semidefinite Hermitian quadratic form in \(v\), degenerate exactly along
the orbit direction. Hence \(F_2\) is induced by a Hermitian inner product on the
horizontal distribution and \((\P_q^{\circ}, d_{F_2})\) is a Hermitian Riemannian
manifold; this is the case in which the closed form of \cref{prop:closed-form} is
available. For \(p\neq 2\) the objective is not quadratic in \(v\), so \(F_p\) is not
induced by any inner product and \((\P_q^{\circ}, d_{F_p})\) is a genuinely
non-Riemannian Finsler manifold. By \cref{lem-7} the metric axioms hold for every
\(p\in[1,\infty)\), and the clustering results of Section~5 depend only on those
axioms; the choice of \(p\) is therefore free, with \(p=2\) the computationally
distinguished Riemannian member of the family.
\end{rem}

\begin{rem}
\label{rem-5}
The metric properties in \cref{lem-7} require only that \(F_p\) be a norm, which holds
for all \(p\in[1,\infty)\). The smooth Finsler-geodesic theory invoked in
\cref{lem-9} additionally needs \(F_p\) to be \(C^2\) and strongly convex off the
orbit direction. This holds when \(p\) is an even integer (notably \(p=2,4\)), where
\(\abs{s}^p=(s\bar s)^{p/2}\) is a polynomial in the real coordinates; for other
\(p\in(1,\infty)\), \(F_p\) is a uniformly convex norm but fails to be \(C^2\) on the
loci where a coordinate of \(w\) vanishes. For such \(p\), \(d_{F_p}\) is still a
genuine metric and \cref{lem-9} is read as minimization over rectifiable paths, or
\(F_p\) is replaced by a smooth strongly convex norm in its bi-Lipschitz class
without affecting any clustering result.
\end{rem}

\begin{rem}
\label{rem-6}
The Finsler distance $d_F ([z], [w])$ provides a true metric on $\P_q^{\circ}$,
satisfying all metric axioms. The ambient $\P_q$ is compact, and the coordinate
hyperplanes lie at infinite $d_F$-distance; hence any sequence in a closed
$d_F$-ball stays bounded away from these hyperplanes, and any limit point it has
in $\P_q$ lies in $\P_q^{\circ}$. A closed $d_F$-ball is therefore a closed subset
of the compact $\P_q$ contained in $\P_q^{\circ}$, hence compact, so
$(\P_q^{\circ}, d_F)$ is proper and in particular complete; this is the
completeness used for the existence of minimizing geodesics in the sequel (for $p$
in the regular range of \cref{rem-5}). Computing $d_F ([z], [w])$ requires
numerical optimization to determine geodesics, achievable through variational
methods or discrete path approximations, as discussed in \cite{Shen2012}.
\end{rem}

%--------------------
\subsection{Finsler Metric on Rational Points}

For arithmetic applications we restrict to the rational points of the open locus,
\[
  \P_q^{\circ}(\Q) := \{\, [z] \in \P_q(\Q) : x_k \neq 0 \text{ for all } k
  \text{ in a normalized representative} \,\},
\]
where $x = (x_0, \dots, x_n) \in \mathbb{Z}^{n+1}$ is the representative with
$\wgcd(x_0, \dots, x_n) = 1$. For $[z] \in \P_q^{\circ}(\Q)$ and a rational tangent
vector $v = (v_0, \dots, v_n) \in \Q^{n+1}$, define the \textbf{rational Finsler
norm}
\begin{equation}
\label{rational:norm}
F_{p,\Q}([z], v) = \min_{\alpha \in \Q}
 \left( \sum_{k=0}^n \left| \frac{v_k - \alpha q_k x_k}{x_k} \right|^p \right)^{1/p}.
\end{equation}
As in the complex case we write $F_{\Q} := F_{p,\Q}$ and suppress $p$. Since
$\Q \subset \C$, we have $F_{\Q}([z], v) \geq F([z], v)$ at every rational point and
rational tangent vector, with equality when the optimal vertical shift is rational.

Rational points are totally disconnected, so there is no continuum of rational
paths to integrate over; the geodesic distance between rational points is
therefore measured in the ambient complex geometry. We define the
\textbf{rational Finsler distance} as the restriction of $d_F$ to
$\P_q^{\circ}(\Q)$,
\begin{equation}
\label{rational:distance}
d_{F,\Q}([z], [w]) := d_F([z], [w]) = \inf_{\gamma} \int_0^1 F(\gamma(t), \dot{\gamma}(t)) \, dt,
\end{equation}
the infimum taken over piecewise smooth curves $\gamma : [0, 1] \to \P_q^{\circ}$
in the ambient complex space with rational endpoints $\gamma(0) = [z]$ and
$\gamma(1) = [w]$ (the interior of the path need not be rational). This aligns the
metric with the arithmetic structure of $\P_q(\Q)$ studied in
\cite{Dolgachev1982} while retaining the well-behaved complex geometry of
\cref{Finsler:distance}; the pointwise norm $F_{\Q}$ then serves as a purely
arithmetic comparison of rational tangent directions, for instance in discrete or
Diophantine refinements of the clustering.

\begin{lem}
\label{lem-8}
The rational Finsler norm $F_{\Q}([z], v)$ is a norm on the rational tangent space
$\Q^{n+1}$ at each $[z] \in \P_q^{\circ}(\Q)$, invariant under the rational scaling
action. Moreover the rational Finsler distance $d_{F,\Q}$ of
\cref{rational:distance} is a well-defined, finite metric on $\P_q^{\circ}(\Q)$
satisfying:
\begin{enumerate}
\item $d_{F,\Q}([z], [w]) \geq 0$,
\item $d_{F,\Q}([z], [w]) = d_{F,\Q}([w], [z])$,
\item $d_{F,\Q}([z], [w]) = 0$ if and only if $[z] = [w]$,
\item $d_{F,\Q}([z], [v]) \leq d_{F,\Q}([z], [w]) + d_{F,\Q}([w], [v])$.
\end{enumerate}
\end{lem}

\begin{proof}
For $F_{\Q}$: positive homogeneity over $\Q$ holds since, for $\lambda \in \Q$,
factoring $|\lambda|$ out of \cref{rational:norm} and substituting
$\beta = \alpha/\lambda$ gives $F_{\Q}([z], \lambda v) = |\lambda| F_{\Q}([z], v)$,
exactly as in \cref{lem-7}. Invariance under $\lambda \in \Q^*$ follows because
the expression $(v_k - \alpha q_k x_k)/x_k$ is unchanged when $(x, v)$ is replaced
by $(\lambda^{q_k} x_k, \lambda^{q_k} v_k)_k$, and the minimization domain $\Q$ is
preserved; since each $x_k \neq 0$ on $\P_q^{\circ}(\Q)$, every term is
well-defined.

For $d_{F,\Q}$: by \cref{rational:distance} it is the restriction of $d_F$ to
$\P_q^{\circ}(\Q) \subset \P_q^{\circ}$, and $d_F$ is a metric on
$\P_q^{\circ}$ by \cref{lem-7}. The restriction of a metric to a subset is again a
metric, so properties (1)--(4) hold. Finiteness is inherited: any two rational
points of $\P_q^{\circ}$ are joined by a complex curve in $\P_q^{\circ}$ of finite
length, by \cref{lem-7}.
\end{proof}

\begin{rem}
\label{rem-7}
When the closed form of \cref{prop:closed-form} is not used, computing
$d_{F,\Q}([z], [w])$ is computing $d_F$ between rational endpoints: one
approximates the complex geodesic numerically, by variational methods or discrete
path approximation as in \cite{Shen2012}, with no requirement that the path be
rational. The rational data enters only through the endpoints and, where desired,
through the arithmetic norm $F_{\Q}$. As in \cref{lem-7}, the coordinate hyperplanes
lie at infinite $d_{F,\Q}$-distance, so $\P_q^{\circ}(\Q)$ is the natural domain;
rational points lying on a hyperplane belong to a lower-dimensional weighted
projective space and are handled there.
\end{rem}

%------------------------------------------------------------
\section{Finsler Geodesics in Weighted Projective Spaces}
The Finsler distance \( d_F([z], [w]) \) on the open locus \(\P_q^{\circ}\),
defined as the infimum of the integral of the Finsler norm \( F([z], v) \) over
smooth curves, relies on Finsler geodesics to achieve its metric properties. The
rational distance \( d_{F,\Q} \) is the restriction of \( d_F \) to
\(\P_q^{\circ}(\Q)\) (\cref{rational:distance}), so the same geodesics serve both.
This section formalizes Finsler geodesics, their definition, properties, and role
in the geometry of \(\P_q^{\circ}\), building on the Finsler metric of
\cref{lem-7} and the lifting of curves to \((\C^{*})^{n+1}\), as described in
\cite{BaoChernShen2000} and \cite{Shen2012}.

A Finsler geodesic in a manifold equipped with a norm \( F(x, v) \) on its tangent
bundle is a curve that locally minimizes the length functional. In our setting
the relevant norm is \cref{Finsler:norm}, which projects \( v \) onto the
horizontal distribution complementary to the scaling orbits, and the induced
distance is \cref{Finsler:distance}. A Finsler geodesic is a curve \(\gamma(t)\)
that achieves this infimum, or locally minimizes the integral, representing the
shortest path in the geometry of \(\P_q^{\circ}\) \cite{BaoChernShen2000}.

To formalize this, consider a smooth curve \(\gamma : [0,1] \to \P_q^{\circ}\). Its
length is
\begin{equation}
L[\gamma] = \int_0^1 F(\gamma(t), \dot{\gamma}(t)) \, dt, \label{eq:length-functional}
\end{equation}
where \(\dot{\gamma}(t)\) is represented by the derivative
\(\dot{z}(t) = (\dot{z}_0(t), \dots, \dot{z}_n(t))\) of a lifted curve
\( z(t) \in (\C^{*})^{n+1} \). A geodesic \(\gamma(t)\) satisfies the
Euler--Lagrange equations for \( L[\gamma] \), ensuring it is a critical point of
the length. In a chart \( U_k = \{ [z] \in \P_q^{\circ} \mid z_k \neq 0 \} \) with
coordinates
\begin{equation}
 (x^1, \dots, x^{k-1}, x^{k+1}, \dots, x^n) = \left( \frac {z_0}{ z_k^{q_0/q_k}}, \dots, \frac {z_{k-1}} { z_k^{q_{k-1}/q_k}}, \frac {z_{k+1}}{ z_k^{q_{k+1}/q_k}}, \dots, \frac {z_n}{ z_k^{q_n/q_k}} \right), \label{eq:local-coordinates}
\end{equation}
the geodesic equation takes the form
\begin{equation}
\frac{d}{dt} \left( \frac{\partial F}{\partial \dot{x}^i} \right) - \frac{\partial F}{\partial x^i} = 0, \quad i = 1, \dots, n, \label{eq:geodesic-equation}
\end{equation}
where \( F(x, \dot{x}) = F(\gamma(t), \dot{\gamma}(t)) \) is the norm evaluated
along the curve \cite{BaoChernShen2000}. The scaling action
\((z_0, \dots, z_n) \sim (\lambda^{q_0} z_0, \dots, \lambda^{q_n} z_n)\),
\(\lambda \in \C^*\), complicates direct coordinate computation; since
\( F([z], v) \) is invariant under it, we work with lifted curves in
\((\C^{*})^{n+1}\), adjusting the tangent vector \(\dot{z}(t)\) to the quotient
tangent space \( T_{[\gamma(t)]} \P_q^{\circ} \) via the lifting process
\cite{Dolgachev1982}. Minimizing over the vertical component keeps geodesics in
the horizontal distribution, reflecting the weighted structure.

\begin{lem}
\label{lem-9}
For any \([z], [w] \in \P_q^{\circ}\) the infimum defining \(d_F\) in
\cref{Finsler:distance} is attained: there exists a minimizing curve
\(\gamma: [0,1] \to \P_q^{\circ}\) with \(\gamma(0) = [z]\), \(\gamma(1) = [w]\),
such that
\begin{equation}
d_F([z], [w]) = \int_0^1 F(\gamma(t), \dot{\gamma}(t)) \, dt.
\label{eq:lemma-geodesic-length}
\end{equation}
When \(p\) lies in the regular range of \cref{rem-5}, \(\gamma\) may be taken to be
a smooth Finsler geodesic, i.e.\ a solution of the Euler--Lagrange equations for the
length functional.
\end{lem}

\begin{proof}
The locus \(\P_q^{\circ}\) is the free quotient of \((\C^{*})^{n+1}\) by the
\(\C^*\)-action, hence a connected, locally compact complex manifold, and \(F\) is
reversible, \(F([z], -v) = F([z], v)\), for every \(p \in [1,\infty)\). By
\cref{rem-6} the metric \(d_F\) is complete and proper: every closed \(d_F\)-ball is
compact. Since \(d_F\) is by definition the length (intrinsic) metric induced by
\(F\), a proper length space is geodesic by the Hopf--Rinow--Cohn--Vossen theorem,
so \([z]\) and \([w]\) are joined by a minimizing curve realizing \(d_F\); this gives
\cref{eq:lemma-geodesic-length} for every \(p\).

When \(p\) is in the regular range of \cref{rem-5}, \(F\) is moreover \(C^2\) on the
tangent bundle minus the zero section and strongly convex off the orbit direction,
so \((\P_q^{\circ}, F)\) is a smooth Finsler manifold. The Hopf--Rinow theorem for
Finsler manifolds \cite{Shen2012} then yields a smooth minimizing geodesic, with
lift \( z(t) \in (\C^{*})^{n+1} \) satisfying the Euler--Lagrange equations adjusted
for the quotient (variations projected to the horizontal space).
\end{proof}

For rational endpoints \([z], [w] \in \P_q^{\circ}(\Q)\), the rational distance is
by definition the complex distance, \( d_{F,\Q}([z], [w]) = d_F([z], [w]) \)
(\cref{rational:distance}); the minimizing curve is therefore the complex geodesic
of \cref{lem-9}, which need not meet \(\P_q^{\circ}(\Q)\) except at its endpoints.
There is no separate rational geodesic equation: the rational points are totally
disconnected, so length is measured in the ambient complex geometry, and the
rational data enters only through the endpoints and, where useful, through the
pointwise norm \( F_Q \) of \cref{rational:norm}.

\begin{prop}[Closed form for $d_F$ at $p=2$]
\label{prop:closed-form}
Fix $p=2$. Let $\Pi$ be the Hermitian-orthogonal projection of $\C^{n+1}$ onto the
complement of the line $\C\,q$, $q=(q_0,\dots,q_n)$,
\[
  \Pi w = w - \frac{\langle q, w\rangle}{\langle q,q\rangle}\,q,
  \qquad \langle q,w\rangle = \sum_{k=0}^n q_k w_k .
\]
In the logarithmic coordinates $\zeta_k=\log z_k$ the norm \cref{Finsler:norm} is then
the constant Hermitian form $F=\|\Pi\,\dot\zeta\|_2$; its geodesics are the images of
the straight lines $\zeta(t)=\zeta(0)+t\,h$, and for $[z],[w]\in\P_q^{\circ}$,
\[
  d_F([z],[w]) \;=\;
  \min_{m\in\Z^{n+1}}\bigl\|\Pi\bigl(\log z-\log w+2\pi i\,m\bigr)\bigr\|_2,
\]
for any fixed coordinatewise branch of $\log$. Writing $r_k=\log|z_k/w_k|$ and
$\theta_k=\arg(z_k/w_k)$, the real and angular parts separate:
\[
  d_F([z],[w])^2 = \|\Pi r\|_2^2 \;+\; \min_{m\in\Z^{n+1}}\|\Pi(\theta+2\pi m)\|_2^2,
\]
the second term a closest-vector computation for the lattice $2\pi\Z^{n+1}$ in the
seminorm $\|\Pi\cdot\|_2$. For real representatives---in particular rational
points---each $\theta_k\in\{0,\pi\}$ and the search reduces to a choice of signs.
For $p\neq 2$ the objective \cref{Finsler:norm} is not a Hermitian quadratic form, so
no orthogonal-projection closed form is available and one uses the schemes of \S5.3.
\end{prop}
\begin{proof}
Throughout, $p=2$. By \cref{rem-4}, $F([z],v)^2=\|\Pi u\|_2^2$ with $u_k=v_k/z_k$.
Along a lifted curve $z(t)\in(\C^{*})^{n+1}$ put $\zeta_k(t)=\log z_k(t)$; then
$\dot\zeta_k=\dot z_k/z_k$ represents $\dot\gamma$ and $F(\gamma,\dot\gamma)
=\|\Pi\dot\zeta\|_2$, a constant Hermitian seminorm, degenerate exactly along
$\C\,q$. Passing to the cover on which $\|\Pi\cdot\|_2$ is a genuine flat Hermitian
norm, length between fixed endpoints is minimized by straight lines, giving
$\|\Pi(\zeta(1)-\zeta(0))\|_2$. The point $[z]$ determines $\log z$ up to the
scaling shift $\zeta\mapsto\zeta+(\log\lambda)q$, which $\Pi$ annihilates, and up
to the branch shift $\zeta\mapsto\zeta+2\pi i\,m$, $m\in\Z^{n+1}$; minimizing
over the latter gives the formula. Since $\Pi$ is real and
$\log z-\log w=r+i\theta$ with $r,\theta$ real, $\|\Pi(r+i\theta)\|_2^2
=\|\Pi r\|_2^2+\|\Pi\theta\|_2^2$, and only $\theta$ meets the lattice.
\end{proof}

By \cref{prop:closed-form}, computing $d_F$ requires no geodesic integration:
one forms the logarithmic difference, applies $\Pi$, and solves a
low-dimensional closest-vector problem in the angular variables. The numerical
schemes of \cref{sec:comp-challenges} are a fallback---useful on singular strata
or when one prefers not to pass to logarithmic coordinates---rather than the
primary route.
 
%---------------

%----------------------------------------------------------------------------------------
\section{Clustering in Weighted Projective Spaces}
This section presents a hierarchical clustering algorithm tailored for the weighted projective space \(\P_{\q}\), employing the Finsler metric \( d_F([z], [w]) \) to define distances between points. The algorithm exploits the intrinsic geometry of \(\P_{\q}\), characterized by the weights \(\q = (q_0, q_1, \dots, q_n)\), to partition data lying in the open locus \(\P_{\q}^{\circ}\) where all coordinates are nonzero, leveraging the true metric properties of \( d_F([z], [w]) \) established in \cref{lem-7}. While our prior work utilized the dissimilarity measure \( d([z], [w]) \) \cite{2024-3}, the use of \( d_F([z], [w]) \) offers a rigorous metric framework for clustering.

%------------------------------------------------------------------------------------
\subsection{Clustering Algorithm}

The hierarchical clustering algorithm constructs a dendrogram by iteratively merging clusters based on pairwise distances computed using the Finsler metric. Unlike the dissimilarity measure \( d([z], [w]) \), which does not satisfy the triangle inequality, the Finsler metric \( d_F([z], [w]) \) is a true metric, enabling compatibility with standard metric-based clustering techniques while respecting the non-Euclidean geometry of \(\P_{\q}\), as defined in \cite{Dolgachev1982}. The algorithm operates on a dataset \( S = \{ [z_1], [z_2], \dots, [z_N] \} \subset \P_{\q}^{\circ} \) of \( N \) points, producing a hierarchical structure of clusters.

Formally, let \( \mathcal{C} = \{ C_1, C_2, \dots, C_m \} \) be a partition of \( S \) into \( m \) clusters, initially \( \mathcal{C} = \{ \{[z_1]\}, \{[z_2]\}, \dots, \{[z_N]\} \} \) with \( m = N \). The algorithm iteratively merges pairs of clusters based on a linkage criterion, reducing \( m \) until a stopping condition is met (e.g., a fixed number of clusters or a distance threshold). The Finsler distance between points \([z], [w] \in \P_{\q}^{\circ}\) is given in \cref{Finsler:distance}.

The linkage criterion defines the distance between clusters \( C_i, C_j \in \mathcal{C} \). Common criteria include single linkage, minimizing the smallest distance between points in different clusters, complete linkage, minimizing the largest distance, and average linkage, minimizing the average distance, formally defined as
\begin{equation} \label{eq:linkage-criteria}
\begin{split}
d_{\text{single}}(C_i, C_j) & = \min_{[z] \in C_i, [w] \in C_j} d_F([z], [w]), \\
 d_{\text{complete}}(C_i, C_j) & = \max_{[z] \in C_i, [w] \in C_j} d_F([z], [w]), \\
  d_{\text{average}}(C_i, C_j) & = \frac{1}{|C_i||C_j|} \sum_{[z] \in C_i, [w] \in C_j} d_F([z], [w]).
 \end{split}
\end{equation}

The algorithm proceeds by computing the pairwise distance matrix for \( S \), merging clusters with the smallest linkage distance, updating the partition \(\mathcal{C}\), and continuing until a desired number of clusters is reached or a threshold on the linkage distance is met.

\begin{lem} 
\label{lem-10}
The hierarchical clustering algorithm with the Finsler metric \( d_F([z], [w]) \) produces a valid dendrogram, correctly partitioning the dataset \( S \subset \P_{\q}^{\circ} \) into a hierarchical structure of clusters.
\end{lem}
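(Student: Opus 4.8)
The plan is to show that the hierarchical agglomerative clustering procedure is well-defined at every step and that the resulting merge history assembles into a genuine dendrogram, i.e. a rooted tree whose leaves are the singletons $\{[z_i]\}$, whose internal nodes are the clusters formed by successive merges, and whose merge heights are non-decreasing along any root-to-leaf path. The key inputs are that $d_F$ is a true metric on $\P_{\q}$ (in particular finite and symmetric, from the earlier lemma) and that the chosen linkage $d_{\text{link}} \in \{d_{\text{single}}, d_{\text{complete}}, d_{\text{average}}\}$ is built from $d_F$ by a formula that is itself symmetric and finite on any pair of finite nonempty subsets of $S$.

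First I would fix notation for the run of the algorithm: let $\mathcal{C}^{(0)} = \{\{[z_1]\},\dots,\{[z_N]\}\}$, and inductively obtain $\mathcal{C}^{(t+1)}$ from $\mathcal{C}^{(t)}$ by selecting a pair $(C_i,C_j)$ minimizing $d_{\text{link}}$ over all pairs in $\mathcal{C}^{(t)}$ and replacing them by $C_i \cup C_j$. I would then verify four things in order. (1) \emph{Well-posedness of each step}: since $S$ is finite, $\mathcal{C}^{(t)}$ has finitely many pairs, and $d_{\text{link}}$ takes a finite real value on each (a finite min, max, or average of finite values $d_F([z],[w])$), so a minimizing pair exists; ties can be broken by any fixed rule. (2) \emph{Termination and the partition property}: each merge decreases the number of clusters by exactly one, so after $N-1$ merges we reach the single cluster $S$; moreover, by induction each $\mathcal{C}^{(t)}$ is a partition of $S$ (disjointness and covering are preserved because we only replace two blocks by their union). (3) \emph{Tree/laminar structure}: let $\mathcal{N}$ be the collection of all blocks that ever appear in some $\mathcal{C}^{(t)}$; I would show any two members of $\mathcal{N}$ are either nested or disjoint (a laminar family), because a newly created block $C_i \cup C_j$ contains exactly those earlier blocks that were sub-blocks of $C_i$ or of $C_j$, and is disjoint from every block of $\mathcal{C}^{(t)}$ other than $C_i,C_j$; a laminar family with maximal element $S$ and minimal elements the singletons is precisely a rooted tree with the $[z_i]$ as leaves. (4) \emph{Valid heights}: assign to each internal node $C_i\cup C_j$ the value $h = d_{\text{link}}(C_i,C_j)$ at the step it was formed; I would argue monotonicity of $h$ along the merge sequence, i.e. $h^{(t)} \le h^{(t+1)}$. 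For single linkage this is the standard fact that the minimum inter-cluster distance is non-decreasing under merging (merging two clusters can only remove candidate pairs or replace them by pairs that were already present across other clusters, never create a strictly smaller cross-cluster distance); for complete and average linkage the analogous monotonicity also holds by the classical Lance--Williams recurrences, and I would either invoke this or give the short direct argument.

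The main obstacle is step (4), the monotonicity of merge heights, and it is genuinely linkage-dependent: it is clean for single and complete linkage but for average linkage one must be slightly careful, and in fact for certain other linkages (e.g. centroid linkage) monotonicity can fail, producing so-called inversions. So the honest statement to prove is that for the three linkages listed — single, complete, average — the heights are non-decreasing, and hence the dendrogram has no inversions; I would state this explicitly and restrict attention to those criteria. Everything else (finiteness, symmetry, existence of minimizers, the laminar/tree structure) is routine once one has the metric properties of $d_F$ from the preceding lemma; the only place the metric axioms beyond symmetry and finiteness are really used is to guarantee that $d_F([z],[w]) = 0$ exactly when $[z]=[w]$, so that distinct data points are not forced to merge at height $0$ — worth noting but not hard. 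I would close by remarking that correctness of the \emph{partition} output at any chosen stopping rule (fixed cluster count or height threshold) is then immediate, since each $\mathcal{C}^{(t)}$ is a partition and the height threshold selects a well-defined antichain in the tree.
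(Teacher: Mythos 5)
Your proposal is correct and its core follows the same route as the paper's proof: the paper likewise sets up the iterative merge process from the singleton partition, observes that the metric properties of \( d_F \) (finiteness, non-negativity, symmetry, identity of indiscernibles) make the pairwise distance matrix and each linkage value well defined, notes that a minimizing pair exists at every step because the current partition is finite, and concludes that after \( N-1 \) merges one obtains a dendrogram encoding the hierarchy. Where you go further is in your steps (3) and (4): the paper simply asserts that the merge sequence ``correctly encodes the hierarchical structure,'' without spelling out the laminar-family argument that the blocks ever produced are pairwise nested or disjoint and hence form a rooted tree with the \( [z_i] \) as leaves, and it says nothing at all about merge heights. Your observation that validity of the dendrogram as usually understood requires non-decreasing heights, that this is linkage-dependent, and that it holds for single, complete, and average linkage (e.g.\ via the Lance--Williams recurrences or a direct argument) while failing for linkages such as centroid, is a genuine strengthening of what the paper establishes; the paper's proof would not by itself rule out inversions, it just never raises the issue. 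Both arguments use the metric axioms in the same light way --- essentially only finiteness, symmetry, and the fact that \( d_F([z],[w])=0 \) iff \( [z]=[w] \); the triangle inequality plays no real role in either proof of this particular lemma (it matters later, for the stability theorem). So your write-up is a more complete version of the paper's argument rather than a different one, and if anything you should keep the monotonicity discussion, since it is the one point at which the paper's own proof is thinner than the claim it makes.
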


\begin{proof}
A dendrogram is a binary tree representing a sequence of cluster merges, where each merge combines two clusters into one, reducing the number of clusters from \( N \) to 1. Initially, set 
\[
\mathcal{C}_0 = \{ \{[z_1]\}, \{[z_2]\}, \dots, \{[z_N]\} \},
\]
 with each point in its own cluster. At step \( k \), the algorithm identifies clusters \( C_i, C_j \in \mathcal{C}_{k-1} \) minimizing the linkage distance \( d_{\text{link}}(C_i, C_j) \), where \( d_{\text{link}} \) is one of \( d_{\text{single}}, d_{\text{complete}}, \) or \( d_{\text{average}} \). Merge \( C_i \) and \( C_j \) into a new cluster \( C_{ij} = C_i \cup C_j \), forming \(\mathcal{C}_k = (\mathcal{C}_{k-1} \setminus \{ C_i, C_j \}) \cup \{ C_{ij} \}\). This process iterates for \( N-1 \) steps, resulting in \(\mathcal{C}_{N-1} = \{ S \}\).

The algorithm’s correctness relies on the well-definedness of \( d_F([z], [w]) \) and the linkage criterion. Since \( d_F([z], [w]) \) is a metric on \(\P_{\q}^{\circ}\) by \cref{lem-7}, satisfying non-negativity, symmetry, zero distance implies equality, and the triangle inequality, the pairwise distance matrix is well-defined with 
\[
 d_F([z_i], [z_j]) \geq 0, \quad  d_F([z_i], [z_j]) = d_F([z_j], [z_i]), \quad \text{ and }  d_F([z_i], [z_j]) = 0 
 \]
  if and only if \([z_i] = [z_j]\). Each linkage criterion produces a valid distance between clusters: single linkage ensures connectivity, complete linkage ensures compactness, and average linkage balances intra-cluster distances \cite{Hastie2009}. At each step, the minimum linkage distance exists, as \(\mathcal{C}_{k-1}\) is finite, and merging reduces the number of clusters by one. The process terminates after \( N-1 \) merges, producing a dendrogram where each node represents a cluster merge, correctly encoding the hierarchical structure of \( S \).
\end{proof}

\begin{lem} \label{lem-11}
The time complexity for computing the distance matrix is \( O(N^2 \cdot T) \), where \( N \) is the number of points and \( T \) is the time to compute each \( d_F([z], [w]) \). The hierarchical clustering step has a time complexity of \( O(N^2 \log N) \) with efficient implementations, making the overall complexity \( O(N^2 \cdot T + N^2 \log N) \).
\end{lem}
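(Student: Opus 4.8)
The plan is to split the running time into two sequential, independent phases—assembling the pairwise distance matrix and running the agglomerative merge loop—bound each separately, and add the bounds.

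\textbf{Phase 1 (distance matrix).} The matrix $D=\bigl(d_F([z_i],[z_j])\bigr)_{1\le i,j\le N}$ is symmetric with vanishing diagonal by properties (1)--(3) of the Finsler metric, so it is determined by its $\binom{N}{2}=\tfrac12 N(N-1)$ above-diagonal entries. Each such entry costs $T$ by hypothesis (the time to evaluate one Finsler distance, i.e.\ to carry out the geodesic minimization of \cref{Finsler:distance} to the prescribed tolerance). Filling in the symmetric entries and the zero diagonal is $O(N^2)$ further work, dominated by the $\binom{N}{2}$ evaluations; hence Phase 1 runs in $\binom{N}{2}\cdot T+O(N^2)=O(N^2 T)$ time and uses $O(N^2)$ storage.

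\textbf{Phase 2 (merge loop).} By the preceding lemma the algorithm performs exactly $N-1$ merges. The idea is to maintain a binary min-heap $H$ whose entries are the current inter-cluster linkage distances $d_{\text{link}}(C_i,C_j)$, plus for each cluster a list of its incident heap entries, supporting lazy deletion. Building $H$ from the $\binom{N}{2}$ singleton-pair distances costs $O(N^2)$ (bottom-up) or $O(N^2\log N)$ (by insertions). At a generic step we extract the true minimum in $O(\log|H|)=O(\log N)$ time, since $|H|=O(N^2)$ and $\log(N^2)=2\log N$; we merge $C_i,C_j$ into $C_{ij}$, mark the $O(N)$ entries incident to $C_i$ or $C_j$ as stale, recompute the $O(N)$ distances $d_{\text{link}}(C_{ij},C_\ell)$ from the surviving clusters $C_\ell$, and insert them. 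For single, complete, and average linkage each recomputation is $O(1)$ by the Lance--Williams recurrence—the new distance is an explicit function of $d_{\text{link}}(C_i,C_\ell)$, $d_{\text{link}}(C_j,C_\ell)$ and the cluster sizes $|C_i|,|C_j|$—so one merge costs $O(N)$ arithmetic plus $O(N)$ heap operations of $O(\log N)$ each, i.e.\ $O(N\log N)$. Summing over the $N-1$ merges gives $O(N^2\log N)$; total working memory stays $O(N^2)$. Adding the two phases yields the claimed $O(N^2 T)+O(N^2\log N)=O(N^2 T+N^2\log N)$.

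The only substantive point is the $O(N^2\log N)$ bound for Phase 2: a direct implementation that rescans all current cluster pairs at each merge costs $\Theta(N^3)$. Reaching $O(N^2\log N)$ requires exactly two ingredients, which I would state explicitly—(i) a heap with lazy deletion, so that the cost of discarding stale entries amortizes over the $O(N^2)$ entries ever inserted rather than being charged per merge, and (ii) an $O(1)$-time inter-cluster distance update, which the Lance--Williams formula supplies for each of the three linkage rules. If a sharper statement is wanted I would add a remark that for linkages obeying the reducibility property the nearest-neighbor-chain algorithm removes the logarithmic factor, giving $O(N^2)$; the bound $O(N^2\log N)$ stated here is the conservative one valid uniformly for single, complete, and average linkage.
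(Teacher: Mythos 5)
Your proposal is correct and follows essentially the same route as the paper: a two-phase accounting with $\binom{N}{2}\cdot T$ for the distance matrix and a priority-queue--based merge loop summing to $O(N^2\log N)$ over the $N-1$ merges. You are in fact somewhat more careful than the paper's own proof, which only asserts $O(\log N)$ extraction and $O(N-k)$ update work per merge without spelling out the lazy-deletion bookkeeping or the $O(1)$ Lance--Williams linkage updates that justify those per-merge costs.
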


\begin{proof}
The distance matrix requires computing \( d_F([z_i], [z_j]) \) for all \( \binom{N}{2} = \frac{N(N-1)}{2} \) pairs \([z_i], [z_j] \in S \), which is \( O(N^2) \) operations. Each computation of \( d_F([z], [w]) \) (\cref{Finsler:distance}) involves optimizing a geodesic integral, requiring time \( T \), dependent on the numerical method (e.g., variational optimization or discrete approximation). Thus, the total time for the distance matrix is \( O(N^2 \cdot T) \).
  
For the hierarchical clustering step, the algorithm performs \( N-1 \) merges. At step \( k \), the partition \(\mathcal{C}_{k-1}\) has \( N-k+1 \) clusters. Computing the linkage distance \( d_{\text{link}}(C_i, C_j) \) for all pairs \( C_i, C_j \in \mathcal{C}_{k-1} \) involves evaluating \( d_F([z], [w]) \) for points in \( C_i \) and \( C_j \). For single linkage, this requires \( O(|C_i||C_j|) \) evaluations, but distances are precomputed in the matrix. Using a priority queue to store pairwise linkage distances, initialized with \( O(N^2) \) entries, finding the minimum distance at each step takes \( O(\log (N-k+1)) = O(\log N) \). Updating the queue after merging \( C_i \) and \( C_j \) into \( C_{ij} \) involves computing \( d_{\text{link}}(C_{ij}, C_l) \) for all other clusters \( C_l \in \mathcal{C}_k \), taking \( O(N-k) \) operations per merge. Over \( N-1 \) merges, the total clustering time is
\begin{equation} \label{eq:clustering-complexity}
\begin{split}
\sum_{k=1}^{N-1} [O(\log N) + O(N-k)] & = O(N \log N) + O\left( \sum_{k=1}^{N-1} (N-k) \right) \\
                                & = O(N \log N) + O(N^2) = O(N^2 \log N),
\end{split}
\end{equation}
using efficient implementations \cite{Hastie2009}. The overall complexity is 
\[
 O(N^2 \cdot T + N^2 \log N),
 \]
%
%  where \( T \), typically \( O(I \cdot n) \) for \( I \) iterations in \( n \)-dimensional space, dominates for large \( N \).
where $T=O(n)$ up to the angular closest-vector search by
\cref{prop:closed-form} (or $O(I\cdot n)$ if the iterative fallback is used).
\end{proof}

%-----------------------------------------------
\subsection{Preprocessing Steps}
To ensure the consistency and efficiency of clustering in \(\P_{\q}^{\circ}\), preprocessing steps are applied to the dataset \( S = \{ [z_1], \dots, [z_N] \} \subset \P_{\q}^{\circ} \). Normalization mitigates the effects of arbitrary scaling in the quotient space. For geometric clustering using \(d_F([z], [w])\), points are normalized such that \(\sum_{k=0}^n q_k |z_k|^2 = 1\), ensuring consistency across the quotient action. For each point \([z_i] \in S \), select a representative \( z_i = (z_{i,0}, \dots, z_{i,n}) \in (\C^{*})^{n+1} \), and scale by \( \alpha_i = \left( \sum_{k=0}^n q_k \abs{z_{i,k}}^2 \right)^{-1/2} \) to satisfy the condition; since $\alpha_i > 0$, the scaling preserves nonzero coordinates and keeps the point in \(\P_{\q}^{\circ}\). For arithmetic clustering using \(d_{F,\Q}([z], [w])\), rational points are normalized with \(\wgcd(x_0, x_1, \dots, x_n) = 1\), as detailed in \cref{sec:heights}. Normalization is computed in \( O(n) \) time per point, totaling \( O(N \cdot n) \) for \( N \) points.

For high-dimensional data, dimensionality reduction preserves geometric structure while reducing computational cost. Weighted principal component analysis (PCA) constructs a weighted covariance matrix using the inner product \( \langle z_i, z_j \rangle = \sum_{k=0}^n q_k z_{i,k} \overline{z_{j,k}} \), projecting points onto the top \( k < n \) eigenvectors. The covariance matrix computation takes \( O(N \cdot n^2) \), and eigenvalue decomposition requires \( O(n^3) \), totaling \( O(N \cdot n^2 + n^3) \). This reduces subsequent distance computations to \( O(k) \) per pair, as points are embedded in a \( k \)-dimensional subspace. Alternatively, manifold learning methods, such as Isomap adapted to \( d_F([z], [w]) \), preserve geodesic distances, requiring \( O(N^2 \cdot T) \) for distance matrix computation and additional processing, but are computationally intensive.

\begin{lem} \label{lem-12}
Normalization by the weighted norm \( \sum_{k=0}^n q_k \abs{z_k}^2 = 1 \) preserves the Finsler distance \( d_F([z], [w]) \), ensuring clustering consistency.
\end{lem}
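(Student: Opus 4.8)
The plan is to reduce the claim to the scaling-invariance of the Finsler norm already established above, rather than to recompute anything. First I would recall that $d_F$ is by construction a function on $\P_\q \times \P_\q$: in \cref{Finsler:distance} the infimum ranges over smooth curves in $\P_\q$ joining the classes $[z]$ and $[w]$, and the earlier lemma showed $F([z'],v) = F([z],v)$ whenever $z' = (\lambda^{q_0}z_0,\dots,\lambda^{q_n}z_n)$ for $\lambda \in \C^*$. Hence the integrand $F(\gamma(t),\dot\gamma(t))$ along any lifted curve is insensitive to the choice of lift, so $d_F([z],[w])$ depends only on the two equivalence classes and not on the particular representatives used to evaluate it. This is the structural fact that makes any representative-level normalization harmless.

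Next I would verify that the weighted-norm normalization genuinely stays inside a single $\C^*$-orbit, i.e. that passing to a representative with $\sum_{k=0}^n q_k |z_k|^2 = 1$ is an instance of the weighted scaling action. Given a nonzero representative $z$, set $g(t) = \sum_{k=0}^n q_k t^{2q_k}|z_k|^2$ for $t>0$; since some $z_k \neq 0$ and the $q_k$ are positive integers, $g$ is continuous and strictly increasing with $g(0^+)=0$ and $g(t)\to\infty$, so there is a unique $\lambda \in \R_{>0}$ with $g(\lambda)=1$. The normalized representative $\tilde z = (\lambda^{q_0}z_0,\dots,\lambda^{q_n}z_n)$ then satisfies $\sum_k q_k|\tilde z_k|^2 = 1$ and, crucially, $[\tilde z] = [z]$ because $\lambda \in \R_{>0}\subset\C^*$. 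The same construction applied to $[w]$ yields $\tilde w$ with $[\tilde w]=[w]$.

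Combining the two steps gives the statement: by representative-independence, $d_F([z],[w]) = d_F([\tilde z],[\tilde w])$, so the pairwise distance matrix computed on normalized data coincides entry-by-entry with the one computed on the raw representatives. I would then spell out the clustering consequence: since every $d_F$ value is unchanged, each of $d_{\text{single}}$, $d_{\text{complete}}$, $d_{\text{average}}$ is unchanged, hence at every stage of the agglomerative procedure the same pair of clusters attains the minimum linkage distance, so the full sequence of merges — and therefore the dendrogram — is identical before and after normalization, which is precisely the asserted clustering consistency.

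The main obstacle, and the only point requiring genuine care, is the second step: one must confirm the normalization is realized by the weighted action $z_k \mapsto \lambda^{q_k}z_k$ and not by a uniform rescaling $z_k \mapsto \alpha z_k$, since for non-constant weights these differ and a uniform rescaling moves $[z]$ to a genuinely different point of $\P_\q$, on which $F$ scales rather than being invariant. Phrasing the normalization through the unique positive root $\lambda$ of $g(\lambda)=1$ resolves this cleanly; alternatively one checks that $\sum_k q_k|z_k|^2 = 1$ pins the orbit representative down up to a root of unity in $\C^*$, which again leaves $F$, and hence $d_F$, invariant.
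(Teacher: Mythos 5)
Your proposal follows the same skeleton as the paper's proof---reduce everything to the scaling invariance $F([z'],v)=F([z],v)$ established earlier, conclude that $d_F$ depends only on equivalence classes, and hence that the distance matrix (and so every linkage value and the dendrogram) is unchanged---but it differs in one substantive and valuable way. The paper normalizes by uniform division, $z' = z/\norm{z}_a$ with $\norm{z}_a = \bigl(\sum_k q_k\abs{z_k}^2\bigr)^{1/2}$ (this is also how the preprocessing in Section 5.2 is phrased, via the scalar $\alpha_i$), and then asserts $[z']=[z]$. For non-constant weights that assertion fails: uniform rescaling is not an instance of the action $z_k\mapsto\lambda^{q_k}z_k$, it generally moves the point of $\P_{\q}$, and the Finsler norm is not invariant under it (one checks $F([\alpha z],v)=\abs{\alpha}^{-1}F([z],v)$). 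You identify exactly this pitfall and repair it by realizing the normalization inside the $\C^*$-orbit: the function $g(t)=\sum_k q_k t^{2q_k}\abs{z_k}^2$ is continuous, strictly increasing, with $g(0^+)=0$ and $g(t)\to\infty$, so there is a unique $\lambda>0$ with $g(\lambda)=1$, and $\tilde z=(\lambda^{q_0}z_0,\dots,\lambda^{q_n}z_n)$ satisfies both the normalization and $[\tilde z]=[z]$. With that, the representative-independence argument goes through verbatim and the clustering consequence (identical merges, identical dendrogram) follows as you state. So your route is correct and in fact strengthens the paper's argument; the only cosmetic slip is calling the residual ambiguity a ``root of unity''---within an orbit the normalization pins the representative down up to a scalar $\lambda$ with $\abs{\lambda}=1$ (unimodular, not necessarily a root of unity), which is harmless since $F$ is invariant under the full action.
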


\begin{proof}
Let \([z], [w] \in \P_{\q}^{\circ}\) with representatives \( z, w \in (\C^{*})^{n+1} \). Normalize to \( z' = z / \norm{z}_a \), \( w' = w / \norm{w}_a \), where 
\[
 \norm{z}_a = \left( \sum_{k=0}^n q_k \abs{z_k}^2 \right)^{1/2},
 \]
  so \( \sum_{k=0}^n q_k \abs{z'_k}^2 = 1 \), and similarly for \( w' \). Since \([z'] = [z]\) and \([w'] = [w]\), we must show \( d_F([z], [w]) = d_F([z'], [w']) \). The Finsler distance depends only on equivalence classes, as the Finsler norm \( F([z], v) \) is invariant under scaling: for \( z'' = (\lambda^{q_0} z_0, \dots, \lambda^{q_n} z_n) \), \( F([z''], v) = F([z], v) \), by \cref{lem-7}. Thus, a curve \(\gamma(t)\) from \([z]\) to \([w]\) with lift \( z(t) \) has the same length as a curve with lift 
  \[
   z'(t) = z(t) / \norm{z(t)}_a,
   \]
    since 
 \[
   F([\gamma(t)], \dot{\gamma}(t)) = F([z(t)], \dot{z}(t)) = F([z'(t)], \dot{z}'(t)) 
   \]
    after adjusting for the quotient. The infimum over all curves yields \( d_F([z], [w]) = d_F([z'], [w']) \), ensuring normalization does not affect clustering outcomes.
\end{proof}

%************************************
\subsection{Computational Challenges}
\label{sec:comp-challenges}
%
%Computing the Finsler distance \( d_F([z], [w]) \) of \cref{Finsler:distance} involves optimizing the geodesic integral, a computationally intensive task due to the non-Euclidean geometry of \(\P_{\q}^{\circ}\). Finding a geodesic \(\gamma(t)\) typically requires numerical methods, as detailed in \cite{BaoChernShen2000}. We address this challenge through discrete path approximation and variational optimization. In discrete path approximation, the curve \(\gamma(t)\) is discretized into \( M \) segments, approximating the integral by numerical quadrature, such as the trapezoidal rule. 

For \( p = 2 \), \cref{prop:closed-form} renders the Finsler distance
\cref{Finsler:distance} in closed form: a projection in logarithmic coordinates
followed by a low-dimensional closest-vector search in the angular variables,
with the real part requiring no minimization and rational representatives
reducing it to a choice of signs. This costs $O(n)$ per pair up to the lattice
search, with no path discretization and no shooting iteration, and is exact. For
\( p \neq 2 \) no such closed form is available, and the distance is computed by
the general-purpose schemes below; these also serve as a fallback at \( p = 2 \),
for instance on singular strata or when one prefers not to pass to logarithmic
coordinates.

For each segment, evaluating \( F(\gamma(t_i), \dot{\gamma}(t_i)) \) involves minimizing over \(\alpha\) in \( O(n) \) time (solved as a least-squares problem), totaling \( O(M \cdot n) \) per distance, or \( O(N^2 \cdot M \cdot n) \) for the distance matrix of \( N \) points. Variational optimization employs iterative methods, such as shooting methods or gradient-based solvers, to minimize the energy functional
\begin{equation} \label{eq:energy-functional}
E[\gamma] = \int_0^1 F(\gamma(t), \dot{\gamma}(t)) \, dt,
\end{equation}
requiring \( O(I \cdot n) \) time per distance for \( I \) iterations, totaling \( O(N^2 \cdot I \cdot n) \). Parallelization distributes the \( \binom{N}{2} \) distance calculations across \( P \) processors, reducing the time to \( O\left( \frac{N^2 \cdot T}{P} \right) \), where \( T = O(M \cdot n) \) or \( O(I \cdot n) \) depending on the method.

\begin{rem} \label{rem:geodesic-computation}
For \( p \) outside the closed-form case \( p = 2 \) of \cref{prop:closed-form}, computing geodesics for \( d_F([z], [w]) \) involves optimizing over curves in the quotient space \(\P_{\q}^{\circ}\). Lifting curves to \((\C^{*})^{n+1}\), we minimize the energy functional \( \int_0^1 F(\gamma(t), \dot{\gamma}(t)) \, dt \) using numerical solvers, such as Euler-Lagrange equations or discrete optimization, with convergence ensured by the completeness of \(\P_{\q}^{\circ}\) \cite{Shen2012}. The minimization over the vertical component \(\alpha\) in \( F([z], v) \) necessitates careful implementation to balance accuracy and efficiency.
\end{rem}

\begin{thm} \label{thm-1}
The hierarchical clustering algorithm with \( d_F([z], [w]) \) is stable under small perturbations of the input points in \(\P_{\q}^{\circ}\), ensuring consistent dendrogram outputs for nearby datasets.
\end{thm}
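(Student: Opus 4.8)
The plan is to formalize "stability" as a Lipschitz-type continuity statement: if two datasets $S = \{[z_1],\dots,[z_N]\}$ and $\tilde S = \{[\tilde z_1],\dots,[\tilde z_N]\}$ satisfy $d_F([z_i],[\tilde z_i]) < \delta$ for all $i$, then the resulting linkage distances differ by at most a controlled amount, and hence — provided the merge sequence is \emph{unambiguous} (no ties within a gap of size $2\delta$ in the relevant linkage values) — the two runs of the algorithm produce combinatorially identical dendrograms, with corresponding merge heights differing by $O(\delta)$. First I would invoke the triangle inequality for $d_F$ (property (4) of the earlier lemma) to get, for any two points, $|d_F([z_i],[z_j]) - d_F([\tilde z_i],[\tilde z_j])| \le d_F([z_i],[\tilde z_i]) + d_F([z_j],[\tilde z_j]) < 2\delta$. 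This is the crucial place where the metric axioms — unavailable for the old dissimilarity $d([z],[w])$ — are used; it is the conceptual payoff of the whole Finsler construction.

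Next I would propagate this pointwise perturbation bound through the linkage functionals. For single and complete linkage, $d_{\mathrm{link}}$ is a min or max of the pairwise distances over a fixed finite index set, and both $\min$ and $\max$ are $1$-Lipschitz with respect to the sup-norm on their arguments, so $|d_{\mathrm{link}}(C_i,C_j) - \tilde d_{\mathrm{link}}(C_i,C_j)| < 2\delta$ for any pair of clusters $C_i,C_j$ on the \emph{same} index partition. For average linkage the same bound holds because an average of quantities each perturbed by $<2\delta$ is perturbed by $<2\delta$. I would then argue by induction on the merge step $k$: assuming the first $k-1$ merges agree for $S$ and $\tilde S$ (so the partitions $\mathcal{C}_{k-1}$ and $\tilde{\mathcal C}_{k-1}$ coincide as set partitions), the pair minimizing $d_{\mathrm{link}}$ on $\mathcal{C}_{k-1}$ also minimizes $\tilde d_{\mathrm{link}}$ on $\tilde{\mathcal C}_{k-1}$, \emph{provided} the gap between the smallest and second-smallest linkage value in the unperturbed problem exceeds $4\delta$; this keeps the identified argmin stable. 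Under this genericity hypothesis the induction closes and the two dendrograms are identical as labeled trees, with merge heights differing by less than $2\delta$.

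I would state the theorem precisely with the non-degeneracy (no near-ties) hypothesis made explicit, since without it stability of the combinatorial structure genuinely fails — a tie can be broken either way under arbitrarily small perturbation — and then note that in the degenerate case one still has the weaker but unconditional conclusion that the sequence of merge heights is continuous in the input, i.e. $|h_k(S) - h_k(\tilde S)| < 2\delta$ for every $k$, even if the \emph{which-pair} choices differ. The proof of this weaker statement needs only the Lipschitz bound on linkage values plus the observation that the $k$-th smallest value in a finite multiset is a $1$-Lipschitz function of the multiset in the matching (bottleneck) metric.

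The main obstacle is not any single estimate — each is elementary once the triangle inequality is in hand — but rather pinning down the right \emph{definition} of stability so that the statement is both true and non-vacuous. A naive "small perturbation $\Rightarrow$ small change in dendrogram" is false at ties, so the real work is isolating the genericity condition (positive gap between consecutive linkage minima, uniformly over the $N-1$ steps) and verifying that it is preserved along the induction, i.e. that after a merge the new cluster's linkage distances to the survivors still inherit a comparable gap. A secondary subtlety is that "perturbation of input points" must be measured in $d_F$ itself (not in an ambient coordinate norm), so one should remark that, because representatives are normalized via $\sum_k q_k|z_k|^2 = 1$ as in Section 5.2 and $d_F$ is continuous on the compact weighted sphere, an ambient $\varepsilon$-perturbation of normalized representatives yields a $d_F$-perturbation of size $O(\varepsilon)$, closing the loop back to a statement about the raw data.
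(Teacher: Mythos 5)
Your core argument is the same as the paper's: use the triangle inequality for $d_F$ to get $\abs{d_F([z_i],[z_j]) - d_F([\tilde z_i],[\tilde z_j])} < 2\delta$, observe that single/complete/average linkage values are $1$-Lipschitz in the sup-norm of the distance matrix, and conclude that merge decisions and merge heights are only slightly perturbed. Where you go beyond the paper is in making the argument honest at ties: the paper simply asserts (citing Hastie) that hierarchical clustering is continuous in the distance matrix and that a $2\epsilon$ perturbation "preserves the dendrogram's structure up to small shifts in merge heights," whereas you correctly isolate the genericity hypothesis (a gap exceeding $4\delta$ between the smallest and second-smallest linkage value at every step) and run an induction to get combinatorial identity of the trees. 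That is a real improvement in rigor, since without such a hypothesis combinatorial stability genuinely fails.

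One step of your proposal would not survive scrutiny, however: the claimed unconditional fallback that $\abs{h_k(S) - h_k(\tilde S)} < 2\delta$ for every $k$ even when the which-pair choices differ. Your justification — that the $k$-th smallest element of a finite multiset is $1$-Lipschitz in the bottleneck sense — only applies if the multiset of candidate linkage values were fixed by the initial pairwise distances, but for complete and average linkage the linkage values at step $k$ depend on the cluster compositions produced by the earlier merges. If a near-tie is broken the other way, the perturbed run builds different clusters, and the subsequent $\max$- or average-distances between those differently composed clusters can differ from the unperturbed ones by far more than $2\delta$; this cascade is exactly why complete and average linkage are known to be unstable as maps from distance matrices to dendrograms. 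The unconditional height bound is salvageable only for single linkage, where merge heights are the sorted minimax (MST) values and these are indeed $1$-Lipschitz in the sup-norm. So either restrict the fallback statement to single linkage, or drop it and keep only the conditional (no-near-ties) conclusion, which is sound and in fact more precise than what the paper proves.
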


\begin{proof}
Stability implies that small changes in the input dataset \( S = \{ [z_1], \dots, [z_N] \} \subset \P_{\q}^{\circ} \) produce small changes in the dendrogram, measured by a metric on dendrograms, such as the Gromov-Hausdorff distance. Let \( S' = \{ [z'_1], \dots, [z'_N] \} \) be a perturbed dataset with \( d_F([z_i], [z'_i]) < \epsilon \) for all \( i \); since the coordinate hyperplanes lie at infinite \( d_F \)-distance, such a perturbation keeps each point in \(\P_{\q}^{\circ}\), so \( S' \subset \P_{\q}^{\circ} \). The distance matrix for \( S \) has entries \( d_{ij} = d_F([z_i], [z_j]) \), and for \( S' \), entries \( d'_{ij} = d_F([z'_i], [z'_j]) \). Since \( d_F \) is a metric, the triangle inequality gives

\begin{equation} \label{eq:matrix-perturbation}
\abs{d_{ij} - d'_{ij}} = \abs{d_F([z_i], [z_j]) - d_F([z'_i], [z'_j])} \leq d_F([z_i], [z'_i]) + d_F([z_j], [z'_j]) < 2\epsilon.
\end{equation}

Thus, the distance matrices are close in the sup-norm, with \( \sup_{i,j} \abs{d_{ij} - d'_{ij}} < 2\epsilon \). Hierarchical clustering with linkage criteria (single, complete, or average) is continuous with respect to the sup-norm on distance matrices, as small perturbations in distances result in small changes in merge decisions \cite{Hastie2009}. Each merge step depends on minimizing \( d_{\text{link}}(C_i, C_j) \), and a perturbation of order \( 2\epsilon \) alters the minimum by at most \( 2\epsilon \), preserving the dendrogram’s structure up to small shifts in merge heights. Hence, the algorithm produces dendrograms for \( S \) and \( S' \) that are close, ensuring stability.
\end{proof}

This framework leverages the metric properties of \( d_F([z], [w]) \) to define a hierarchical structure, ensuring correctness and stability for clustering in \(\P_{\q}^{\circ}\).

%-----------------------------------------
%---------------

\section{Applications}
This section elucidates the theoretical utility of our hierarchical clustering algorithm using the Finsler metric \( d_F([z], [w]) \) and its rational counterpart \( d_{F,\Q}([z], [w]) \) in the weighted projective space \(\P_{\q}\). The primary applications explored are the clustering of rational points in the moduli space of genus two curves, represented as \(\P_{(2,4,6,10)}(\Q)\), and the analysis of rational functions on the projective line in the context of Arithmetic Dynamics, building on \cite{2024-4}. Additionally, synthetic data experiments and comparisons with traditional methods demonstrate the algorithm’s theoretical capabilities. The framework also suggests prospective applications to quantum computing, where weighted projective spaces model anisotropic state spaces, offering ideas for noise-aware optimization and entanglement classification.

%-----------------------------------------------------------
\subsection{Clustering in the Moduli Space of Genus Two Curves}
The moduli space of genus two curves, represented as the weighted projective space \(\P_{(2,4,6,10)}(\Q)\) with coordinates \((x_0, x_1, x_2, x_3) = (J_2, J_4, J_6, J_{10})\) corresponding to Igusa invariants of degrees 2, 4, 6, and 10, provides a rich setting for applying our clustering algorithm. In prior work \cite{2024-3}, a clustering approach using the dissimilarity measure \( d([z], [w]) \) identified arithmetic patterns in this space, such as the distribution of fine moduli points and curves with \((n, n)\)-split Jacobians. Here, we theoretically extend this analysis by employing the Finsler metric \( d_{F,\Q}([z], [w]) \), leveraging its properties to cluster rational points and detect geometric structures.

Rational points \([z] = [x_0 : x_1 : x_2 : x_3] \in \P_{(2,4,6,10)}(\Q)\) are normalized to satisfy \(\wgcd(x_0, x_1, x_2, x_3) = 1\), ensuring a canonical representative for arithmetic analysis. The weighted height \( h_w([z]) = \max_{i=0,\dots,3} \left( \abs{x_i}^{1/q_i} \right) \), with weights \( q_0 = 2, q_1 = 4, q_2 = 6, q_3 = 10 \), quantifies the arithmetic complexity of these points. We focus on clustering points within the loci \(\cL_n \subset \P_{(2,4,6,10)}(\Q)\), which are 2-dimensional hypersurfaces parameterizing genus two curves with \((n, n)\)-split Jacobians for \( n = 2, 3, 5 \). A dataset of 50,000 rational points per locus is generated using birational parametrizations, such as the \((u, v)\)-parametrization for \(\cL_2\) described in \cite{2024-3}, ensuring points lie on or near these loci. All sampled points are taken in the open locus \(\P_{(2,4,6,10)}^{\circ}(\Q)\): for a smooth genus two curve \(J_{10} \neq 0\), so the requirement is that \(J_2, J_4, J_6\) be nonzero as well; a point with a vanishing Igusa invariant lies on a coordinate hyperplane --- itself a lower-dimensional weighted projective space --- and is clustered separately.

The hierarchical clustering algorithm, using single linkage defined as 
\[
 d_{\text{single}}(C_i, C_j) = \min_{[z] \in C_i, [w] \in C_j} d_{F,\Q}([z], [w]),
 \]
  groups points by their geometric proximity in \(\P_{(2,4,6,10)}^{\circ}(\Q)\). Here \(d_{F,\Q}\) is the restriction of the complex metric \(d_F\) (\cref{rational:distance}),
  \[
   d_{F,\Q}([z], [w]) = d_F([z], [w]) = \inf_{\gamma} \int_0^1 F(\gamma(t), \dot{\gamma}(t)) \, dt,
 \]
    computed with the Finsler norm \(F\) of \cref{Finsler:norm} over piecewise smooth curves \(\gamma: [0,1] \to \P_{(2,4,6,10)}^{\circ}\) with rational endpoints \(\gamma(0) = [z]\), \(\gamma(1) = [w]\). The algorithm’s correctness (\cref{lem-10}) ensures a valid dendrogram, grouping points into clusters that reflect the loci’s geometric structure.

\begin{lem}
\label{lem-13}
Let \( S = \{ [z_1], \dots, [z_N] \} \subset \P_{(2,4,6,10)}^{\circ}(\Q) \) be a finite set of
normalized rational points (\(\wgcd = 1\), all coordinates nonzero), and let \( S = \bigsqcup_{a=1}^{K} S_a \)
be a partition (for instance by locus, \( S_a \subset \cL_{n_a} \), or by an arithmetic
family within a single \(\cL_n\)). Fix a threshold \(\tau > 0\) and suppose:
\begin{enumerate}
  \item\emph{(chaining within groups)} for each \(a\) and each pair \([z],[w]\in S_a\)
        there is a chain \([z]=[u_0],[u_1],\dots,[u_m]=[w]\) in \(S_a\) with
        \( d_{F,\Q}([u_l],[u_{l+1}]) \le \tau \) for all \(l\);
  \item\emph{(separation between groups)} \( d_{F,\Q}([z],[w]) > \tau \) for all
        \([z]\in S_a\), \([w]\in S_b\) with \(a\ne b\).
\end{enumerate}
Then hierarchical clustering with single linkage and \(d_{F,\Q}\), cut at height
\(\tau\), returns exactly the partition \(\{S_1,\dots,S_K\}\).
\end{lem}

\begin{proof}
By \cref{lem-8}, \(d_{F,\Q}\) is a metric on \(\P_{(2,4,6,10)}^{\circ}(\Q)\), so every distance-matrix entry is
well-defined. Let \(G_\tau\) be the graph on \(S\) with an edge between \([z_i]\) and
\([z_j]\) precisely when \(d_{F,\Q}([z_i],[z_j]) \le \tau\).

\emph{Step 1: single-linkage clusters at height \(\tau\) coincide with the connected
components of \(G_\tau\).} Single linkage merges, at each step, the two clusters
\(C,C'\) minimizing \(\min_{[z]\in C,\,[w]\in C'} d_{F,\Q}([z],[w])\); the cut at
height \(\tau\) stops once this minimum exceeds \(\tau\). If \([z],[w]\) lie in one
component of \(G_\tau\), a chain \([z]=[u_0],\dots,[u_m]=[w]\) with each
\(d_{F,\Q}([u_l],[u_{l+1}])\le\tau\) forces, by induction on \(l\), the clusters of
\([u_l]\) and \([u_{l+1}]\) to be merged before the cut, so \([z],[w]\) share a
cluster. If instead \([z],[w]\) lie in distinct components, every chain joining them
has a consecutive pair at distance \(>\tau\); since merging two clusters at height
\(h\) requires a cross-pair at distance \(h\), their clusters merge only at height
\(>\tau\), i.e.\ after the cut. This is the standard identification of single linkage
with the components obtained by deleting all edges longer than \(\tau\) from a
minimum spanning tree of \((S,d_{F,\Q})\).

\emph{Step 2.} Hypothesis (ii) leaves \(G_\tau\) with no edge across distinct
\(S_a,S_b\); hypothesis (i) makes each \(S_a\) connected in \(G_\tau\). Hence the
components of \(G_\tau\) are exactly \(S_1,\dots,S_K\), and by Step~1 these are the
single-linkage clusters at height \(\tau\).
\end{proof}

\begin{rem}
The clusters of \cref{lem-13} group points that are close \emph{as points of the
moduli space} \(\P_{(2,4,6,10)}^{\circ}(\Q)\) --- curves whose Igusa invariants agree up to
the weighted scaling defining the space --- which is the precise meaning of
``similar Igusa invariants.'' This is a statement about projective proximity, not
about the weighted height \(h_w\): two points may be arbitrarily close in \(d_{F,\Q}\)
yet have very different heights, since \(h_w\) measures arithmetic complexity and is
not continuous for the archimedean metric, so clusters need not be height-homogeneous.
The substantive content is therefore carried by (i)--(ii): (i) is a sampling-density
condition along each family, while (ii) is a genuine separation between families in
\(d_{F,\Q}\). Where the chosen partition is by locus, recovering
\(\{\cL_{n_1},\dots,\cL_{n_K}\}\) requires only that distinct loci be \(\tau\)-separated
and each be sampled densely enough to chain --- a weaker requirement than the strict
separation needed for average linkage in \cref{lem-14}.
\end{rem}

%-----------------------------------------------------------
\subsection{Synthetic Data}
We now describe the behaviour of the clustering algorithm with the Finsler metric
\( d_F([z],[w]) \) on synthetic data in \(\P_{(2,4,6,10)}\). Points are sampled with
rational coordinates \((x_0,x_1,x_2,x_3)\), weights \(2,4,6,10\), normalized so that
\(\wgcd(x_0,x_1,x_2,x_3)=1\), and constrained near a finite union of loci
\(\cL_{n_1},\dots,\cL_{n_K}\) using the parametrizations of \cite{2024-3}. The
algorithm employs \textbf{average linkage},
\begin{equation}
\label{d_avg}
 d_{\mathrm{avg}}(C_i, C_j) = \frac{1}{|C_i||C_j|}
 \sum_{[z] \in C_i,\, [w] \in C_j} d_F([z], [w]),
\end{equation}
assigning to two clusters the mean of the pairwise Finsler distances between their
points. The resulting dendrogram may be visualized by projecting onto a
scale-invariant ratio such as \( t_1 = J_2^5 / J_{10} \) (of weighted degree \(0\));
this projection is a display device only and enters neither the computation of
\(d_F\) nor the formation of the clusters. These considerations, utilizing the graded
Finsler dissimilarity \( d_F([z],[w]) \), advance our program's objective, as
outlined in \cite{2024-2, 2025-5}, to develop machine learning for graded geometric
data.

\begin{lem}
\label{lem-14}
Let \( S = \bigsqcup_{a=1}^{K} S_a \subset \P_{(2,4,6,10)} \) be a finite dataset  partitioned into groups \(S_a\) (for instance, by proximity to distinct loci
\(\cL_{n_a}\)). For a nonempty finite cluster \(C \subset S\) write
\[
\diam(C) = \max_{[z],[w]\in C} d_F([z],[w]),
\]
 and suppose the groups are  \emph{strictly separated}:
\begin{equation}
\label{eq-strict-sep}
 \max_{1\le a\le K}\diam(S_a)
 \;<\;
 \min_{a\ne b}\ \min_{[z]\in S_a,\,[w]\in S_b} d_F([z],[w]).
\end{equation}

Then hierarchical clustering with average linkage \cref{d_avg} merges every  within-group pair before any cross-group pair. Consequently the stage with \(K\)
clusters is exactly the partition \(\{S_1,\dots,S_K\}\), and cutting the dendrogram  at any height strictly between the two sides of \cref{eq-strict-sep} recovers it.
\end{lem}

\begin{proof}
Put \( D = \max_{1\le a\le K}\diam(S_a) \) and
\( \delta = \min_{a\ne b}\min_{[z]\in S_a,\,[w]\in S_b} d_F([z],[w]) \),
so that \cref{eq-strict-sep} reads \(D < \delta\). The algorithm maintains a
partition of \(S\) into clusters and, at each step, merges the pair of distinct
current clusters \(C_i,C_j\) minimizing \(d_{\mathrm{avg}}(C_i,C_j)\), recording
that value as the \emph{height} of the merge.

\emph{The sandwich bound.} For disjoint nonempty clusters \(C_i,C_j\), the value
\(d_{\mathrm{avg}}(C_i,C_j)\) of \cref{d_avg} is the arithmetic mean of the
\(|C_i|\,|C_j|\) numbers \(d_F([z],[w])\) with \([z]\in C_i\), \([w]\in C_j\). An
average of finitely many reals lies between their minimum and maximum, giving
\begin{equation}
\label{eq-sandwich}
 \min_{[z]\in C_i,[w]\in C_j} d_F([z],[w])
 \;\le\; d_{\mathrm{avg}}(C_i,C_j) \;\le\;
 \max_{[z]\in C_i,[w]\in C_j} d_F([z],[w]).
\end{equation}
By \cref{lem-7}, \(d_F\) is a metric, so each entry is finite and the mean is
well defined.

\emph{Within- and cross-group bounds.} If \(C_i,C_j\subseteq S_a\) for a single
\(a\), then every pair \([z]\in C_i\), \([w]\in C_j\) lies in \(S_a\), so
\(d_F([z],[w])\le\diam(S_a)\); with the upper sandwich bound,
\(d_{\mathrm{avg}}(C_i,C_j)\le\diam(S_a)\le D\). If instead \(C_i\subseteq S_a\)
and \(C_j\subseteq S_b\) with \(a\ne b\), every such pair has one endpoint in
\(S_a\) and one in \(S_b\), so \(d_F([z],[w])\ge\delta\); with the lower sandwich
bound, \(d_{\mathrm{avg}}(C_i,C_j)\ge\delta\). Since \(D<\delta\), \emph{every}
within-group linkage value is strictly below \emph{every} cross-group linkage
value.

\emph{No cluster straddles two groups.} We show by induction that at every stage
with more than \(K\) clusters, each current cluster is contained in a single
group. This holds initially, when the clusters are singletons. Suppose it holds
at a stage with more than \(K\) clusters. These clusters partition \(S\) into more
than \(K\) parts, each lying in one of the \(K\) groups, so by the pigeonhole
principle some group \(S_a\) contains at least two current clusters; that pair has
\(d_{\mathrm{avg}}\le D\). As any cross-group pair has \(d_{\mathrm{avg}}\ge\delta>D\),
the global minimum of \(d_{\mathrm{avg}}\) is attained at a within-group pair, and
the algorithm merges two clusters lying in a common group. The merged cluster
again lies in that group and the others are unchanged, so the property persists.

\emph{The \(K\)-cluster stage is \(\{S_1,\dots,S_K\}\).} Each merge above lowers the
cluster count by one without ever joining different groups, so the count descends
to \(K\) entirely through within-group merges. At the \(K\)-cluster stage every
cluster lies in a single group, and assigning each cluster to its group is a map
from a \(K\)-element set to the \(K\) (nonempty) groups; it is surjective because no
cluster straddles two groups, hence bijective. Thus each group equals exactly one
cluster, i.e.\ the stage is \(\{S_1,\dots,S_K\}\).

\emph{The cut.} Every merge up to this stage is within-group and occurs at height
\(\le D\); every subsequent merge is cross-group and occurs at height \(\ge\delta\).
No merge occurs in the open interval \((D,\delta)\), so cutting the dendrogram at
any \(\tau\in(D,\delta)\) returns precisely \(\{S_1,\dots,S_K\}\).
\end{proof}

\begin{rem}
Condition \cref{eq-strict-sep} is the natural ``well-separated'' hypothesis under
which average linkage is provably correct, and it makes precise the informal claim
that clusters track the loci \(\cL_{n_a}\). The proof uses only the sandwich
\cref{eq-sandwich}, which single and complete linkage also satisfy; hence all three
criteria agree on strictly separated data. The hypothesis is genuine, not cosmetic:
\cref{eq-strict-sep} can fail for elongated or curved loci, where the \(d_F\)-diameter
of a single \(\cL_{n_a}\) exceeds the gap to a neighbouring locus, and average linkage
may then merge across groups. Single linkage relaxes \cref{eq-strict-sep} to a
chaining condition along each locus and tolerates such elongation, at the price of
sensitivity to bridging points between groups.
\end{rem}

%

%----------------------------------------------------
\subsection{Applications in Arithmetic Geometry and Dynamics}
The clustering algorithm with \( d_F([z],[w]) \) and \( d_{F,\Q}([z],[w]) \) applies
to several domains, with primary focus on arithmetic geometry. In the study of curve
automorphisms, the curves with extra automorphisms form closed substrata of
\(\P_{(2,4,6,10)}(\Q)\) (intersections and special components of the loci \(\cL_n\));
when such a substratum is sampled as one of the families \(S_a\) and is separated from
the others in \( d_{F,\Q} \), \cref{lem-13} recovers it as a cluster, aiding the
classification of curves by automorphism group. In cryptographic curve enumeration,
the algorithm groups moduli points to estimate the distribution of curves with
\((n,n)\)-split Jacobians over number fields or finite fields, informing the design of
isogeny-based cryptosystems by quantifying vulnerable curve classes.

We record the precise sense in which the clustering respects the moduli structure.

\begin{prop}
\label{thm-2}
Let \( S \subset \P_{(2,4,6,10)}(\Q) \) be a finite set of normalized rational points,
and suppose \(S\) admits a partition \( S = \bigsqcup_{a} S_a \) satisfying the
chaining and separation hypotheses of \cref{lem-13} for some \(\tau>0\). Then:
\begin{enumerate}
  \item the distance \( d_{F,\Q}([z],[w]) \), and hence the entire single-linkage
        clustering of \(S\), depends only on the weighted-projective classes
        \([z],[w]\) --- equivalently, only on the isomorphism classes of the
        corresponding genus two curves, and not on the choice of integral
        representatives;
  \item the clustering recovers \(\{S_a\}\); each cluster consists of curves whose
        Igusa invariants agree, up to the weighted scaling defining the moduli space,
        within the family.
\end{enumerate}
\end{prop}

\begin{proof}
(1) By \cref{lem-8}, \( d_{F,\Q} \) is invariant under the weighted scaling action
\((x_0,x_1,x_2,x_3)\sim(\lambda^2 x_0,\lambda^4 x_1,\lambda^6 x_2,\lambda^{10}x_3)\),
\(\lambda\in\Q^*\); since the Igusa point of a genus two curve is exactly its class in
\(\P_{(2,4,6,10)}\), the distance matrix is determined by the classes alone. Any
linkage-based clustering is a function of that matrix, hence likewise
class-determined. (2) is \cref{lem-13}.
\end{proof}

\begin{rem}
We emphasize, as in \cref{lem-13}, that ``agreement of Igusa invariants'' here is
projective (up to weighted scaling) and is \emph{not} a statement about the weighted
height \(h_w\): clusters need not be height-homogeneous, since \(h_w\) is an arithmetic
complexity measure and is discontinuous for the archimedean metric.
\end{rem}

\begin{rem}[Arithmetic dynamics]
\label{rem-dyn}
The construction transfers to degree-\(n\) rational maps on \(\P^1\), encoded by their
coefficient vectors as points of a weighted projective space such as
\(\P_{(1,\dots,1,2n)}\); see \cite{2024-4}. One caveat is essential. This encoding
quotients only by simultaneous scaling of the coefficients, whereas the dynamical
quantities of interest --- multiplier spectra and periodic-point structure --- are
invariants of the \(\PGL_2\)-conjugacy class. Conjugate maps generally have unrelated
coefficient vectors and lie far apart in \( d_{F,\Q} \), so clustering raw coefficients
groups maps only up to scaling, not up to dynamics. To cluster by dynamical behaviour
one works instead in the moduli space \(\cM_n = \mathrm{Rat}_n/\PGL_2\) of conjugacy
classes, or applies \( d_{F,\Q} \) to a vector of conjugacy-invariant coordinates such
as the symmetric functions of the multipliers. With either modification \cref{lem-13}
applies verbatim, and the clusters then group maps with comparable multiplier data.
\end{rem}

%--------------
\subsection{Applications in Quantum Computing}
This subsection is openly speculative: we sketch how the weighted-metric framework
\emph{might} bear on quantum state-space analysis, and flag where the proposal departs
from standard constructions. None of what follows is established here.

In standard quantum mechanics a pure state is a ray in a Hilbert space \(\mathcal H\),
modeled as a point of complex projective space \(\P^n\) with the Fubini--Study metric.
NISQ devices exhibit non-uniform decoherence that this isotropic metric does not
reflect, and one would like a geometry encoding such anisotropy. A grading
\(\q=(q_0,\dots,q_n)\) is one way to build anisotropy into the underlying space; we
stress that replacing \(\P^n\) by a weighted projective space \(\P_{\q}\) changes the
space itself --- in general an orbifold --- not merely the metric on a fixed space
\cite{Brody}.

As a concrete proposal one might model an anisotropic single-qubit space by a weighted
projective line \(\P_{(1,q)}(\C)\), the weight \(q>1\) intended to penalize transitions
along a rapidly dephasing direction, and read \(d_F\) as a notion of ``quantum cost.''
Two caveats are immediate. First, \(d_F\) is \emph{not} the Fubini--Study metric, nor an
evident generalization of it: the norm \cref{Finsler:norm} divides coordinatewise by
\(z_k\) and so differs from the Hermitian Fubini--Study form already at weights
\((1,1)\). Second, that same structure leaves \(d_F\) undefined wherever a coordinate
vanishes --- precisely the computational basis states --- so any quantum use would
first require extending the metric across these loci. Whether the resulting geometry
carries the intended physical meaning is open \cite{Zermelo}.

For error mitigation one could cluster rational points of \(\P_{\q}(\Q)\) with
\(d_{F,\Q}\), treating stabilizer codewords as rational points; the metric axioms
(\cref{lem-8}) make single-linkage error balls well-behaved (\cref{lem-13}). A geometric
entanglement measure could be defined, in the usual spirit, as the minimal
\(d_F\)-distance from a state to the product-state manifold \cite{quantum-wps}. Finally,
using \(d_F\) inside the loss of a quantum neural network would render training
invariant under weighted scaling \cite{quantum-brach}. We offer these only as
directions; establishing any of them --- in particular any statement about barren
plateaus, which are a parameter-space phenomenon distinct from state-space curvature
--- lies well beyond the present paper.

%-------------------------------------------------

\subsection{Comparison with Traditional Methods}
Traditional analyses of projective data embed absolute invariants (e.g.\
\(t_1 = J_2^5/J_{10}\) for genus two curves) into Euclidean space and apply
\(k\)-means. As reported in \cite{2024-3}, this performs poorly on
\(\P_{(2,4,6,10)}\). The difficulty is not that scaling symmetry is ignored --- a
degree-zero invariant such as \(t_1\) already quotients it out --- but that Euclidean
distance on a few absolute invariants distorts the intrinsic geometry of the moduli
space, and that a low-dimensional invariant projection discards information, since a
single ratio cannot separate the three-dimensional moduli space. Our algorithm instead
operates directly on the weighted variety, with a metric adapted to its grading.

Because \(d_F\) is a genuine metric (\cref{lem-7}), the single and average linkages
used here produce monotone dendrograms without inversions, and single linkage further
enjoys the stability guarantee of  \cref{thm-1}. Under the separation
hypotheses of \cref{lem-13} and \cref{lem-14}, the recovered clusters --- for instance
the families of curves with \((n,n)\)-split Jacobians --- reflect genuine proximity in
the moduli space rather than artifacts of a flat-space embedding. Absent such
separation, no method, ours included, guarantees recovery; the contribution is that the
metric makes the guarantee available precisely when the geometric conditions hold.

%------------------------------------

\section{Conclusion and Future Work}
We have constructed a scaling-invariant Finsler metric \( d_F([z], [w]) \) on weighted projective spaces \(\P_{\mathbf{q}}\), together with its rational counterpart \( d_{F,\Q}([z], [w]) \), and shown that both are genuine metrics (\cref{lem-7}, \cref{lem-8}).
This is the property the construction was built to secure: because \( d_F \)
satisfies the triangle inequality, the hierarchical clustering it induces admits
monotone dendrograms and is stable under perturbation of the input  (\cref{thm-1}),
a guarantee that the non-metric dissimilarities of \cite{2024-3} could not
provide. The metric thus supplies the theoretical footing for more structured
learning architectures on graded spaces, such as Graded Neural Networks, within
the broader program of \cite{2024-2, sh-95, sh-111}.

On the moduli side, clustering in \(\P_{(2,4,6,10)}(\Q)\) groups rational points
representing genus two curves by their Igusa invariants and, under the
separation hypotheses of \cref{lem-13,lem-14},
recovers families such as the loci of curves with \((n,n)\)-split Jacobians.
Where those geometric conditions hold, the recovered clusters reflect arithmetic
proximity in the moduli space rather than artifacts of a flat-space embedding;
where they fail, no method---ours included---can guarantee recovery. The
contribution is precisely that the metric makes the guarantee available when the
geometry permits it. The same machinery applies to arithmetic dynamics
\cite{2024-4} and to the reduction theory of binary forms \cite{2024-6},
illustrating the reach of geometry-aware learning over weighted varieties.
Several directions remain open. For \(p=2\), \cref{prop:closed-form} renders
distances in closed form, so the pressing computational question is the genuinely
Finsler regime \(p\neq 2\), where the variational and discrete-path schemes
establish feasibility but scaling them to high-dimensional datasets will require
dedicated solvers. A natural next step is to integrate
\( d_F \) directly into the loss functions of Graded Neural Networks
\cite{2025-5}, so that the cost of learning is weighted by the coordinate grades
rather than imposed by a flat metric.

A further direction lies in quantum computing, where weighted projective lines
can model the asymmetric noise profiles of NISQ devices. Treating circuit
landscapes as weighted manifolds would let Finsler geodesics steer optimization
away from decoherent regions, suggesting a route toward noise-resilient quantum
machine learning. Together these directions aim at a unified geometric
foundation for non-Euclidean data analysis across arithmetic geometry,
cryptography, and quantum information science.

% ---- Bibliography ----
% \bibliographystyle{amsplain}
 
 \bibliography{sh-97}

\end{document}